\newtheorem*{thm*}{Theorem}
\newtheorem{prop}{Proposition}%
\newtheorem*{cor*}{Corollary}
\theoremstyle{definition}
\theoremstyle{remark}
\theoremstyle{plain}
\def\CC{{\mathbb C}}
\def\NN{{\mathbb N}}
\def\QQ{{\mathbb Q}}
\def\RR{{\mathbb R}}
\def\TT{{\mathbb T}}
\def\ZZ{{\mathbb Z}}
\def\hatZZ{\widehat{\mathbb Z}}
\def\vecj{{\text{\boldmath$j$}}}
\def\vecm{{\text{\boldmath$m$}}}
\def\vecp{{\text{\boldmath$p$}}}
\def\vecr{{\text{\boldmath$r$}}}
\def\vecu{{\text{\boldmath$u$}}}
\def\vecv{{\text{\boldmath$v$}}}
\def\vecx{{\text{\boldmath$x$}}}
\def\vecy{{\text{\boldmath$y$}}}
\def\vecalf{{\text{\boldmath$\alpha$}}}
\def\vecomega{{\text{\boldmath$\omega$}}}
\def\vecnull{{\text{\boldmath$0$}}}
\def\scrA{{\mathcal A}}
\def\scrB{{\mathcal B}}
\def\scrD{{\mathcal D}}
\def\scrF{{\mathcal F}}
\def\scrS{{\mathcal S}}
\def\scrT{{\mathcal T}}
\def\fB{{\mathfrak B}}
\def\fC{{\mathfrak C}}
\def\Re{\operatorname{Re}}
\def\dist{\operatorname{dist}}
\def\SL{\operatorname{SL}}
\def\SO{\operatorname{SO}}
\def\vol{\operatorname{vol}}
\def\GamG{\Gamma\backslash G}
\def\trans{\,^\mathrm{t}\!}
\numberwithin{equation}{section}
\title{Smallest denominators}
\author{Jens Marklof}
\address{Jens Marklof, School of Mathematics, University of Bristol, Bristol BS8 1UG, U.K.\newline \rule[0ex]{0ex}{0ex} \hspace{8pt}{\tt j.marklof@bristol.ac.uk}}
\date{17 October 2023/1 February 2024}
\thanks{Research supported by EPSRC grant EP/S024948/1. Data supporting this study are included within the article. MSC (2020): 11K60, 11J13, 37A17}
\begin{document}

\begin{abstract} 
We establish higher dimensional versions of a recent theorem by Chen and Haynes [Int. J. Number Theory 19 (2023), 1405--1413] on the expected value of the smallest denominator of rational points in a randomly shifted interval of small length, and of the closely related 1977 Kruyswijk-Meijer conjecture recently proved by Balazard and Martin [Bull. Sci. Math. 187 (2023), Paper No. 103305]. We express the distribution of smallest denominators in terms of the void statistics of multidimensional Farey fractions and prove convergence of the distribution function and certain finite moments. The latter was previously unknown even in the one-dimensional setting. We furthermore obtain a higher dimensional extension of Kargaev and Zhigljavsky's work on moments of the distance function for the Farey sequence [J. Number Theory 65 (1997), 130--149] as well as new results on pigeonhole statistics.
\end{abstract}

\maketitle

\section{Introduction (the one-dimensional case)}

Motivated by Meiss and Sander's recent paper \cite{Meiss21} (which we will return to in Section \ref{minsec}), Chen and Haynes \cite{Chen23} investigated the smallest denominator of all fractions in a small interval of length $\delta$ with random center $x$,
\begin{equation}\label{oneptone}
q_{\min}(x,\delta) = \min \left\{ q\in\NN : \exists \tfrac{p}{q} \in\QQ \cap (x-\tfrac{\delta}{2},x+\tfrac{\delta}{2}) \right\}.
\end{equation}
Their main results are (a) an explicit formula for the distribution for fixed $\delta$ and $x$ uniformly distributed in the unit interval, and (b) the asymptotics of the expectation of $q_{\min}(x,\delta)$ as $\delta\to 0$, which they show is $\frac{16}{\pi^2} \delta^{-1/2}+O(\log^2\delta)$. We will see below that the statistics of $q_{\min}(x,\delta)$ is in fact given by a scaled version of the Hall distribution for the gaps between Farey fractions. This complements recent work of Artiles \cite{Artiles23} who proved the existence of the limit distribution using dynamics on the space of lattices. (We will comment on the link between the two approaches at the end of this introduction.)

Farey fractions of level $Q$ are defined as the finite set
\begin{equation}\label{fareydef234}
	\scrF_Q=\bigg\{ \tfrac{p}{q} \in[0,1) : (p,q)\in\hatZZ^2, \; 0<q\leq Q \bigg\} ,
\end{equation}
where $\hatZZ^2$ denotes the set of primitive lattice points, i.e., integer vectors with coprime coordinates.
The number of elements is asymptotically $\#\scrF_Q \sim \sigma_Q := \frac{3}{\pi^2} Q^2$ as $Q\to\infty$. 
The Hall distribution $H(s)$ \cite{Hall70} describes the relative frequency of gaps in $\scrF_Q$ of size larger than $s\sigma_Q^{-1}$ as $Q\to\infty$; see \cite{Farey} for the relevant background. We have the explicit formula
\begin{equation}\label{formula}
H(s) = 
\begin{cases}
1 & \text{if $t\in[1,\infty)$}\\
-1+2t-2t\log t & \text{if $t\in[\frac14,1]$}\\
-1+ 2t+2\sqrt{\frac14-t}-4t\log\left(\frac12+\sqrt{\frac14-t}\right) & \text{if $t\in[0,\frac14]$,}
\end{cases}
\end{equation}
with shorthand $t=(\tfrac{\pi^2}{3} s)^{-1}$. Formula \eqref{formula} was rediscovered by Kargaev and Zhigljavsky in their study of the void distribution of $\scrF_Q$ \cite[Theorem 1.2 and Lemma 2.6]{Kargaev97}. There is now an extensive literature on the statistical properties of Farey fractions in dimension one, see \cite{Boca05} and references therein.

\begin{figure}
\begin{center}
\includegraphics[width=0.7\textwidth]{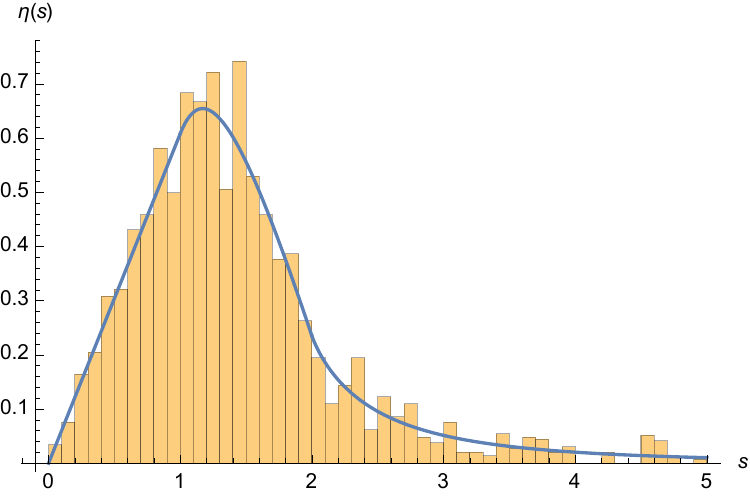}
\end{center}
\caption{The limit density $\eta(s)$ compared to the distribution of the smallest denominator of rationals in each interval $[\frac{j}{3000},\frac{j+1}{3000})$, $j=0,\ldots,2999$, cf.~Section \ref{secDiscrete}. The same law describes the shortest cycle length of a large random circulant directed graph of (in- and out-) degree 2  \cite{circulant}.} \label{fig1}
\end{figure}

Our first observation is the following. 

\begin{prop}\label{1111}
For any interval $\scrD\subset[0,1]$ and $L>0$, we have
\begin{equation}\label{LD1}
\lim_{\delta\to 0} \vol\left\{ x\in \scrD :  \delta^{1/2} q_{\min}(x,\delta) > L  \right\} = \vol\scrD \; \int_L^\infty \eta(s)\, ds
\end{equation}
with the probability density
\begin{equation}\label{Hallex}
\eta(s)=\tfrac{6}{\pi^2}\, s\, H(\tfrac{3}{\pi^2} s^2) .
\end{equation}
\end{prop}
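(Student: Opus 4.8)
The plan is to recognise $\{\,x:\delta^{1/2}q_{\min}(x,\delta)>L\,\}$ as the set of $x$ whose $\delta$-neighbourhood avoids the level-$Q$ Farey fractions, with $Q=\lfloor L\delta^{-1/2}\rfloor$, and then to read \eqref{LD1} off Hall's distribution \eqref{formula}.

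First I would note that $q_{\min}(x,\delta)$ depends on $x$ only modulo $1$ and that the minimising fraction may be taken in lowest terms; hence $\delta^{1/2}q_{\min}(x,\delta)>L$ holds exactly when the arc of length $\delta$ about $x$ is disjoint from $\scrF_Q$, $Q=\lfloor L\delta^{-1/2}\rfloor$. If $x$ lies in a gap $(\gamma,\gamma')$ of $\scrF_Q$ of length $\ell=\gamma'-\gamma$, this means $x\in[\gamma+\tfrac\delta2,\gamma'-\tfrac\delta2]$, a set of measure $(\ell-\delta)_+$; summing over gaps (the $O(1)$ gaps meeting $\partial\scrD$ each have length at most $\tfrac1Q$) yields the exact identity
\[
\vol\bigl\{x\in\scrD:\delta^{1/2}q_{\min}(x,\delta)>L\bigr\}=\sum_{(\gamma,\gamma')\subseteq\scrD}(\ell-\delta)_+ + O(Q^{-1}).
\]

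Next I would rescale. Writing $\sigma_Q=\tfrac{3}{\pi^2}Q^2$, one has $\delta=s_0\sigma_Q^{-1}$ with $s_0:=\tfrac{3}{\pi^2}\delta Q^2\to\tfrac{3}{\pi^2}L^2$ as $\delta\to0$, and, with $\Psi_\scrD(u)$ the number of gaps of $\scrF_Q$ contained in $\scrD$ of length exceeding $u$,
\[
\sum_{(\gamma,\gamma')\subseteq\scrD}(\ell-\delta)_+=\int_\delta^\infty\Psi_\scrD(u)\,du=\int_{s_0}^\infty\frac{\Psi_\scrD(s\sigma_Q^{-1})}{\sigma_Q}\,ds .
\]
By Hall's theorem \eqref{formula}, in its form localised to a subinterval (equidistribution of the Farey fractions together with their gap structure, which is classical; see \cite{Farey,Boca05,Kargaev97}), the integrand tends pointwise to $\vol\scrD\cdot H(s)$, while it is bounded by $\#\scrF_Q/\sigma_Q\le\tfrac{\pi^2}{3}$ uniformly in $Q$ and $s$. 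To move the limit through the unbounded $s$-integral I would estimate long gaps directly: a gap of $\scrF_Q$ has length $\ell=1/(bd)$ with $b,d\le Q$ and $b+d>Q$, so $\ell>S\sigma_Q^{-1}$ forces $bd<\tfrac{3}{\pi^2S}Q^2$, and a short count then gives $\sum_{\ell>S\sigma_Q^{-1}}\ell\ll S^{-1}$ uniformly in $Q$; consequently $\int_S^\infty\Psi_\scrD(s\sigma_Q^{-1})\sigma_Q^{-1}\,ds\le\sum_{\ell>S\sigma_Q^{-1}}\ell\ll S^{-1}$ uniformly. Splitting the $s$-integral at $S$, applying bounded convergence on $[s_0,S]$ and the tail bound on $[S,\infty)$, and letting $S\to\infty$, I obtain
\[
\lim_{\delta\to0}\vol\bigl\{x\in\scrD:\delta^{1/2}q_{\min}(x,\delta)>L\bigr\}=\vol\scrD\int_{\frac{3}{\pi^2}L^2}^\infty H(s)\,ds .
\]
Finally the substitution $u=\tfrac{3}{\pi^2}s^2$ turns the right-hand side into $\vol\scrD\int_L^\infty\tfrac{6}{\pi^2}s\,H(\tfrac3{\pi^2}s^2)\,ds=\vol\scrD\int_L^\infty\eta(s)\,ds$, which is \eqref{LD1}; that $\eta$ is a probability density is the case $L\downarrow0$ (equivalently $\int_0^\infty H=1$, the normalised mean gap, which the same tail estimate supplies).

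The delicate part is the passage to the limit. One needs the void/gap statistics of $\scrF_Q$ not merely globally but localised to an arbitrary interval $\scrD$ — exactly the kind of input that the equidistribution methods used later in this paper provide cleanly (horocycle dynamics on the space of lattices, cf.\ the remarks on Artiles's work) — and one needs uniform control of the atypically long gaps in order to interchange limit and integral, since the integrand $(\ell-\delta)_+$ grows linearly in $\ell$; without that estimate Fatou's lemma yields only the easy bound $\liminf\ge\vol\scrD\int_L^\infty\eta$.
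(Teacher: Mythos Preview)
Your argument is correct and reaches the same endpoint as the paper, namely
\[
\lim_{\delta\to 0}\vol\{x\in\scrD:\delta^{1/2}q_{\min}(x,\delta)>L\}=\vol\scrD\int_{\frac{3}{\pi^2}L^2}^\infty H(s)\,ds,
\]
but the routes differ. The paper does not decompose into gaps at all: it observes that the event $\delta^{1/2}q_{\min}(x,\delta)>L$ is exactly the void event $\scrF_Q\cap(x-\tfrac{s}{2\sigma_Q},x+\tfrac{s}{2\sigma_Q})+\ZZ=\emptyset$ with $Q=L\delta^{-1/2}$, $s=\tfrac{3}{\pi^2}L^2$, and then cites the convergence of the void distribution to $P(0,[0,s])$ as a known result (Kargaev--Zhigljavsky for $\scrD=[0,1]$, \cite{Farey} for general $\scrD$). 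The identification $P(0,[0,s])=\int_s^\infty H$ then comes from the general differential relation between void and gap distributions, with no tail estimate needed at this stage because the limit object is already continuous and vanishes at infinity.

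What you do instead is effectively re-derive the void limit from the gap limit: you write the void measure as $\sum_{\text{gaps}}(\ell-\delta)_+=\int_{s_0}^\infty \sigma_Q^{-1}\Psi_\scrD(s\sigma_Q^{-1})\,ds$, invoke the localised Hall theorem for the pointwise limit of the integrand, and then supply the uniform tail bound $\sum_{\ell>S\sigma_Q^{-1}}\ell\ll S^{-1}$ (your sketch is fine: $d>Q/2$ forces $b<2T/Q$ with $T=3Q^2/(\pi^2 S)$, each such $b$ contributes at most $2/Q$, total $\ll T/Q^2\ll S^{-1}$) to justify the interchange of limit and integral. This is more hands-on and more self-contained at the level of Farey arithmetic, at the price of having to prove the tail control yourself; the paper's version is shorter because that work is hidden inside the cited void-statistics theorem. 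Both approaches ultimately rest on the same equidistribution input for the localisation to $\scrD$.
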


\begin{proof}
We have
\begin{equation} \label{cal1}
q_{\min}(x,\delta) > L \delta^{-1/2} \Leftrightarrow \left\{ (p,q)\in\hatZZ^2 :  0< q \leq L\delta^{-1/2} , \tfrac{p}{q} \in \left(x-\tfrac{\delta}{2},x+\tfrac{\delta}{2}\right) \right\} =\emptyset  . 
\end{equation}
Now, for the choice $Q=L\delta^{-1/2}$, $s= \frac{3}{\pi^2} L^2$,
the right hand side of \eqref{cal1} is equivalent to
\begin{equation}\label{cal2}
\scrF_Q \cap \left(x-\tfrac{s}{2\sigma_Q},x+\tfrac{s}{2\sigma_Q}\right)+\ZZ =\emptyset .
\end{equation}
As proved in \cite{Kargaev97} for $\scrD=[0,1]$, and in \cite{Farey} for general $\scrD$, the Lebesgue measure of the set of $x\in\scrD$ satisfying \eqref{cal2} has a limit as $Q\to\infty$, namely the void distribution $P(0,[-\frac{s}{2},\frac{s}{2}])=P(0,[0,s])$ in the notation of \cite{Farey}. Note that the limit is independent of the choice of $\scrD$. It is a general fact that the derivative of the void distribution yields the gap distribution \cite{nato}, 
\begin{equation}\label{diff}
-\frac{d}{ds} P(0,[0,s]) = P_0(0,[0,s]),
\end{equation}
which in the present case is the classic Hall distribution $H(s)$ \cite{Farey}. Both functions are continuous and we have $P(0,[0,\infty))=0$, so by the fundamental theorem of calculus
\begin{equation}
P(0,[0,s]) = \int_s^\infty P_0(0,[0,s'])\, ds'.
\end{equation}
The limit in \eqref{LD1} is therefore
\begin{equation}
\int_{\frac{3}{\pi^2} L^2}^\infty H(s)\, ds,
\end{equation}
and the formula for the limit density $\eta(s)$ follows by differentiation.
\end{proof}

From \eqref{formula} and \eqref{Hallex} we deduce the explicit formula
\begin{equation}\label{formula2}
\eta(s) = \tfrac{6}{\pi^2}\times
\begin{cases}
s  & \text{if $s\in[0,1]$}\\
-s+2s^{-1}+4s^{-1}\log s & \text{if $s\in[1,2]$}\\
-s+ 2s^{-1}+2 s \sqrt{\frac14 - s^{-2}}-4s^{-1}\log\bigg(\frac12+\sqrt{\frac14-s^{-2}}\bigg) & \text{if $s\geq 2$,}
\end{cases}
\end{equation}
see Figure \ref{fig1}. Note that $H(s) \sim\frac{18}{\pi^4} s^{-2}$ for $s$ large, and hence $\eta(s)=\tfrac{6}{\pi^2}\, s\, H(\tfrac{3}{\pi^2} s^2) \sim \frac{12}{\pi^2} s^{-3}$. 

Interestingly, $\eta(s)$ also describes the distribution of the shortest cycle length of a large random circulant directed graph of (in- and out-) degree 2  \cite[Figure 5 and Eq.~(5.19)]{circulant}. 

We extend Proposition \ref{1111} to rational points in arbitrary dimensions in Section \ref{secArb}, Proposition \ref{PropLD}. The key ingredients here are limit theorems for the fine-scale statistics of multidimensional Farey fractions \cite{Farey}.

\begin{figure}
\begin{center}
\includegraphics[width=0.7\textwidth]{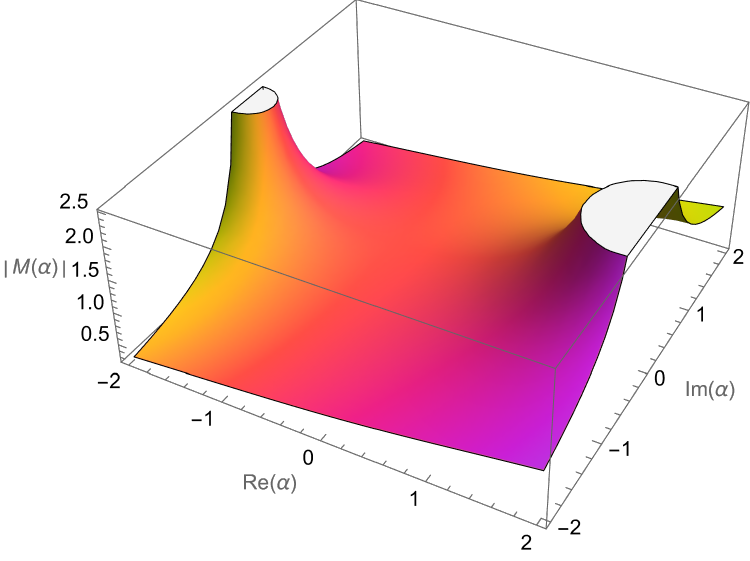}\quad\includegraphics[width=0.085\textwidth]{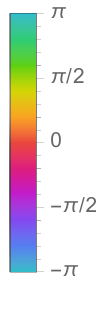}
\end{center}
\caption{The function $M(\alpha)$, with the height of the graph representing its absolute value and the colour its argument.} \label{fig2}
\end{figure}

Our next result is an extension of the Chen-Haynes asymptotics for the expected value to general (but small) moments. 

\begin{prop}\label{propo}
For any interval $\scrD\subset[0,1]$ and $\alpha\in\CC$ with $|\Re\alpha|<2$, we have
\begin{equation}\label{LD1Mom}
\lim_{\delta\to 0} \delta^{\alpha/2}  \int_{\scrD}   q_{\min}(x,\delta)^\alpha dx=
\vol\scrD \;  M(\alpha), \quad \text{with}\quad M(\alpha) = \int_0^\infty s^{\alpha} \eta(s)\, ds.
\end{equation}
\end{prop}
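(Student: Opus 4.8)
The plan is to deduce \eqref{LD1Mom} from Proposition \ref{1111} as a statement about convergence of moments, the only substantial ingredient being a pair of uniform-in-$\delta$ tail bounds for $Y_\delta(x):=\delta^{1/2}q_{\min}(x,\delta)$ that match the decay of $\eta$ near $0$ and near $\infty$. One may assume $\vol\scrD>0$ (otherwise both sides vanish), and after dividing by $\vol\scrD$ it suffices to show $\EE Y_\delta^\alpha\to M(\alpha)$ as $\delta\to 0$, where $\EE f:=(\vol\scrD)^{-1}\int_\scrD f\,dx$. Proposition \ref{1111} says precisely that $\EE\ind{Y_\delta>s}\to F(s):=\int_s^\infty\eta(u)\,du$ for every $s>0$; since $F$ is continuous with $F(0)=1$ and $F(\infty)=0$, this is convergence in distribution, and in particular $\EE\ind{Y_\delta<s}\to 1-F(s)$. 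All that is missing is a form of uniform integrability.

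The first task is to establish the uniform estimate: there is a constant $C>0$ such that
\begin{equation}\label{twotails}
\EE\ind{Y_\delta>s}\le\min\bigl(1,\,Cs^{-2}\bigr)\quad\text{and}\quad\EE\ind{Y_\delta<s}\le\min\bigl(1,\,Cs^{2}\bigr)
\end{equation}
for all $\delta>0$ and $s>0$. The second bound is immediate: $Y_\delta(x)<s$ forces some $p/q$ with $q<Q:=s\delta^{-1/2}$ to lie within $\delta/2$ of $x$, so the corresponding set of $x$ is covered by $\le\#\scrF_Q+1\le Q^2+1$ intervals of length $\delta$, hence has measure $\le(Q^2+1)\delta\le Cs^2$. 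For the first bound, $Y_\delta(x)>s$ means $(x-\tfrac\delta2,x+\tfrac\delta2)$ misses every fraction of reduced denominator $\le Q$; the measure of this set equals $\sum_i\max(0,g_i-\delta)=\int_\delta^\infty N_Q(t)\,dt$, where the $g_i$ are the lengths of the gaps between consecutive elements of $\scrF_Q+\ZZ$ and $N_Q(t):=\#\{i:g_i>t\}$. Here one uses the elementary fact that consecutive Farey fractions $\tfrac ab<\tfrac cd$ of order $Q$ satisfy $bc-ad=1$ and $b+d>Q$, so a gap $\tfrac1{bd}>t$ forces $bd<\tfrac1t$ and $\max(b,d)>\tfrac Q2$, whence $\min(b,d)<2(tQ)^{-1}$; as the other denominator then ranges over an interval of length $\min(b,d)$, summing over the $O((tQ)^{-1})$ admissible values of $\min(b,d)$ gives $N_Q(t)\le C(Q^2t^2)^{-1}$, and therefore $\int_\delta^\infty N_Q(t)\,dt\le C(Q^2\delta)^{-1}=Cs^{-2}$. (Rounding $Q$ to an integer and the boundary gaps of $[0,1)$ only affect constants.)

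With \eqref{twotails} in hand, fix $\alpha\ne 0$ with $|\Re\alpha|<2$ (the case $\alpha=0$ being trivial since $\int_0^\infty\eta=1$). Starting from $y^\alpha-1=\alpha\int_1^y s^{\alpha-1}\,ds$ for $y>0$ and splitting according to whether $y>1$ or $y<1$ gives the pointwise identity
\begin{equation}\label{layer}
y^\alpha-1=\alpha\int_1^\infty s^{\alpha-1}\ind{y>s}\,ds-\alpha\int_0^1 s^{\alpha-1}\ind{s>y}\,ds .
\end{equation}
I would then substitute $y=Y_\delta(x)$ in \eqref{layer}, integrate over $\scrD$, and interchange the order of integration; this is justified by \eqref{twotails}, which gives $\int_1^\infty s^{\Re\alpha-1}\,\EE\ind{Y_\delta>s}\,ds\le C\int_1^\infty s^{\Re\alpha-3}\,ds<\infty$ (using $\Re\alpha<2$) and $\int_0^1 s^{\Re\alpha-1}\,\EE\ind{Y_\delta<s}\,ds\le C\int_0^1 s^{\Re\alpha+1}\,ds<\infty$ (using $\Re\alpha>-2$), and in passing shows $M(\alpha)$ is finite. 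Because these majorants are independent of $\delta$, dominated convergence together with Proposition \ref{1111} yields
\begin{equation*}
\EE Y_\delta^\alpha-1\;\longrightarrow\;\alpha\int_1^\infty s^{\alpha-1}F(s)\,ds-\alpha\int_0^1 s^{\alpha-1}\bigl(1-F(s)\bigr)\,ds .
\end{equation*}
Substituting $F(s)=\int_s^\infty\eta$ and $1-F(s)=\int_0^s\eta$ and interchanging integrals once more (justified in the same way) collapses the right-hand side to $\int_0^\infty s^\alpha\eta(s)\,ds-1=M(\alpha)-1$, so $\EE Y_\delta^\alpha\to M(\alpha)$, which is \eqref{LD1Mom}.

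I expect the crux to be the uniform upper tail bound, i.e.\ the first inequality in \eqref{twotails} — equivalently the estimate $N_Q(t)\le C(Q^2t^2)^{-1}$ on the number of long gaps of $\scrF_Q$, uniformly in $Q$. In dimension one this follows quickly from the two-term description ($b+d>Q$) of Farey neighbours, as above; but it is precisely this input — absent from Proposition \ref{1111} — that upgrades distributional convergence to convergence of the moments $M(\alpha)$ throughout $|\Re\alpha|<2$, and whose analogue is the step where the higher-dimensional argument in Section \ref{secArb} requires genuinely more work.
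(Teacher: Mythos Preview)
Your proof is correct and takes a genuinely different route from the paper's. The paper deduces Proposition~\ref{propo} as the $n=1$ case of Proposition~\ref{propi}, where the key uniform upper-tail bound \eqref{bibo} is obtained via the space-of-lattices interpretation: Str\"ombergsson's lemma on lattices avoiding large balls, combined with the escape-of-mass estimates for embedded horospheres from \cite{Kim23}. You instead stay entirely in dimension one and derive the uniform bound $\EE\ind{Y_\delta>s}\le Cs^{-2}$ directly from the classical structure of consecutive Farey fractions ($bc-ad=1$, $b+d>Q$), through the elementary gap-count estimate $N_Q(t)\ll (Qt)^{-2}$; your reading of ``the other denominator ranges over an interval of length $\min(b,d)$'' as $d\in(Q-b,Q]$ is exactly right. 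You also treat the ranges $\Re\alpha>0$ and $\Re\alpha<0$ in one stroke via the layer-cake identity \eqref{layer} centred at $1$, whereas the paper handles them separately with two distinct layer-cake representations.

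What each approach buys: yours is fully elementary and self-contained for $n=1$, bypassing the homogeneous-dynamics machinery altogether; the paper's route is heavier but is precisely what makes the extension to arbitrary $n$ possible, as you correctly anticipate in your final paragraph --- the Farey-neighbour description has no direct analogue in higher dimensions. One cosmetic point: when you write that the measure of $\{Y_\delta>s\}$ ``equals $\sum_i\max(0,g_i-\delta)$'' you are implicitly computing over all of $[0,1)$ rather than over $\scrD$, but since only an upper bound is required (and you then divide by $\vol\scrD$) this is harmless.
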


It is interesting that the convergence of moments smallest denominators is not an immediate corollary of the convergence of moments for the void distribution of Farey fractions proved in \cite{Kargaev97}, even though the limits coincide. We will prove Proposition \ref{propo} as a special case of Proposition \ref{propi} (valid in any dimension) in Section \ref{secArb}. Ref.~\cite{Kargaev97} provides explicit formulas and asymptotics for the moments of the void statistics. In particular, for $|\Re\alpha|<2$, these yield
\begin{equation}\label{lastin}
M(\alpha) = \frac{3}{\pi^2} \int_0^\infty t^{-(\alpha+4)/2} F(t) dt = \frac{6}{\pi^2(\alpha+2)} \int_0^1 t^{-(\alpha+2)/2} dF(t) ,
\end{equation}
where $F(t) = H(s)$ is the function on the right hand side of \eqref{formula}. The last integral in \eqref{lastin} is computed in \cite[Lemma 2.6]{Kargaev97}, and we obtain for $|\Re\alpha|<2$, $\alpha\neq 0$,
\begin{equation}
M(\alpha)  = \frac{24}{\pi^2 \alpha (\alpha+2)} \left( \frac{2}{\alpha} + 2^{\alpha} \mathrm{B}\left(-\frac{\alpha}{2},\frac12\right)\right)  ,
\end{equation}
where $\mathrm{B}(x,y)$ is the beta function (Euler's integral of the first kind); see Figure \ref{fig2} for a plot of $M(\alpha)$, and Figure \ref{fig4} in Section \ref{secDiscrete} for a comparison with numerical data.
For $\alpha=1$ the above expression evaluates to $\frac{16}{\pi^2}$, which is the constant found by Chen and Haynes \cite{Chen23}.

The remainder of this paper is organised as follows.
Following the same argument as in dimension one outlined above, we translate in Section \ref{secArb} the problem of smallest denominators in small subsets of $\RR^n$ to the statistics of multidimensional Farey fractions. We then apply the setting in \cite{Farey} and use equidistribution and escape-of-mass estimates for group actions on the space of lattices. 
For the distribution for rationals in sets with random center, the relevant action on the space of lattices is the $\RR^n$-action by the horospherical subgroup. If we move to the setting of the Kruyswijk-Meijer conjecture \cite{KM77,Steward13}, which was proved recently by Balazard and Martin \cite{Balazard23}, then the Lebesgue integral is replaced by a discrete average and, as we will explain in Section \ref{secDiscrete}, the relevant action is a $\ZZ^n$-action by the ``time-one'' map of the horospherical subgroup. This leads to the proof of convergence of the full distribution function, and we will see that the limit is the same as in the case of continuous sampling. It also provides an alternative proof of the Kruyswijk-Meijer conjecture, including extensions to other moments and again to higher dimensions. The role of the void statistics for Farey fractions is now replaced by the so-called pigeonhole statistics, which is of independent interest and the content of Section \ref{secPigeon}. We will use a similar strategy of proof as recently employed by Pattison \cite{Pattison23} for the pigeonhole statistics of $\sqrt{n}\bmod 1$. In Section \ref{sec97} we discuss moments of the distance function for the multidimensional Farey sequence, thus extending results of Kargaev and Zhigljavsky \cite{Kargaev97}.
Section \ref{minsec} concludes this study with a limit theorem for the Meiss-Sander distribution \cite{Meiss21} for minimal resonances in volume-preserving maps, which was the original motivation for Chen and Haynes \cite{Chen23}.

The interpretation of smallest denominators in terms of the space of lattices was recently pointed out by Artiles \cite{Artiles23}. Artiles proves convergence of the distribution function (an analogue of Proposition \ref{PropLD}), using the strategy developed by Str\"ombergsson and the author in \cite{partI} for more general lattice point problems concerning thin randomly sheared or rotated domains. Ref.~\cite{partI} includes an application to directional statistics for visible lattice points in arbitrary dimension and also formed the basis for the study of multidimensional Farey fractions in \cite{Farey}. The equivalence \eqref{cal1}-\eqref{cal2} (cf.~also \eqref{cal10} below) thus explains the link between \cite{Artiles23} and \cite{Farey}, both of which use equidistribution of closed horospheres to establish limit theorems for smallest denominators and Farey statistics, respectively. For a generalisation of the results in \cite{Farey} to Farey fractions subject to congruence conditions see Heersink \cite{Heersink21}. More restrictive constraints related to thin groups are discussed in the work of Lutsko \cite{Lutsko22}.

The present paper does not include any discussion of rates of convergence, although there is no principal obstruction in obtaining these since the horospherical equidistribution results we use here are available with precise error terms.

\section{Smallest denominators for multidimensional fractions}\label{secArb}

Define the set of $n$-dimensional Farey fractions of level $Q\geq 1$ ($Q$ not necessarily an integer) by
\begin{equation}\label{fareydef}
	\scrF_Q=\bigg\{ \frac{\vecp}{q} \in[0,1)^n : (\vecp,q)\in\hatZZ^{n+1}, \; 0<q\leq Q \bigg\} .
\end{equation}
For large $Q$, we have
\begin{equation}\label{asymQ}
	\#\scrF_Q \sim \sigma_Q:=\frac{Q^{n+1}}{(n+1)\,\zeta(n+1)}  .
\end{equation}
Given a bounded set $\scrA\subset\RR^n$ with boundary of Lebesgue measure zero and non-empty interior, define
\begin{equation}
q_{\min}(\vecx,\delta,\scrA) = \min \left\{ q\in\NN : \exists \tfrac{\vecp}{q} \in\QQ^n \cap \vecx+\delta\scrA \right\}.
\end{equation}
Assuming $\scrA$ has non-empty interior ensures the minimum exists.

Set furthermore $G=\SL(n+1,\RR)$, $\Gamma=\SL(n+1,\ZZ)$, and
\begin{equation}
P(0,\scrA) = \mu\{ g\in\GamG : \hatZZ^{n+1}g \cap \fC(\scrA)  = \emptyset\} ,
\end{equation}
where $\mu$ is the Haar probability measure on $\GamG$ and 
\begin{equation}
\fC(\scrA)=\{ (\vecx,y)\in\RR^n\times (0,1] : \vecx \in \sigma_1^{-1/n} y\scrA \} \subset \RR^{n+1}
\end{equation}
is a cone with base $\scrA$.

\begin{prop}\label{PropLD}
For $\scrA\subset\RR^n$ bounded and $\scrD\subset[0,1]^n$, both with boundary of Lebesgue measure zero and non-empty interior, $L>0$, we have
\begin{equation}\label{LD1zzz}
\lim_{\delta\to 0} \frac{\vol\left\{ \vecx\in \scrD :  \delta^{n/(n+1)} q_{\min}(\vecx,\delta,\scrA) > L  \right\}}{\vol\scrD } = E_\scrA(L)
\end{equation}
with $E_\scrA(L)=P(0,\sigma_1^{1/n} L^{1+1/n} \scrA)$.
\end{prop}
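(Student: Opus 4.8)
The plan is to reproduce, in arbitrary dimension, the strategy behind Proposition~\ref{1111}: translate the event $\delta^{n/(n+1)}q_{\min}(\vecx,\delta,\scrA)>L$ into the statement that a suitably rescaled copy of $\hatZZ^{n+1}$ misses the cone $\fC(\scrB)$ for an appropriate dilate $\scrB$ of $\scrA$, and then invoke the fine-scale limit theorem for multidimensional Farey fractions from \cite{Farey}. First I would note, exactly as in \eqref{cal1}, that $q_{\min}(\vecx,\delta,\scrA)>Q$ holds iff no $(\vecp,q)\in\hatZZ^{n+1}$ with $0<q\leq Q$ satisfies $\vecp/q\in\vecx+\delta\scrA$; choosing $Q=L\,\delta^{-n/(n+1)}$ turns $\delta\to 0$ into $Q\to\infty$, and since $\sigma_Q=\sigma_1 Q^{n+1}$ one has $\delta\scrA=\sigma_Q^{-1/n}\scrB$ with $\scrB:=\sigma_1^{1/n}L^{1+1/n}\scrA$, so the event becomes $\scrF_Q\cap(\vecx+\sigma_Q^{-1/n}\scrB)+\ZZ^n=\emptyset$. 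Rewriting $\vecp/q\in\vecx+\sigma_Q^{-1/n}\scrB$, $0<q\leq Q$, as $\vecp-q\vecx\in q\sigma_Q^{-1/n}\scrB$, $0<q/Q\leq1$, and applying the unimodular map
\begin{equation*}
g_{\vecx,Q}=\begin{pmatrix} I_n & 0\\ -\vecx & 1\end{pmatrix}\begin{pmatrix}Q^{1/n}I_n & 0\\ 0 & Q^{-1}\end{pmatrix},\qquad (\vecp,q)\,g_{\vecx,Q}=\bigl(Q^{1/n}(\vecp-q\vecx),\,q/Q\bigr),
\end{equation*}
a short computation (the only arithmetic being $\sigma_Q^{-1/n}Q^{1+1/n}=\sigma_1^{-1/n}$) shows the event is precisely $\hatZZ^{n+1}g_{\vecx,Q}\cap\fC(\scrB)=\emptyset$. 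Since $g_{\vecx,Q}$ is a horospherical element followed by $\diag(Q^{1/n},\dots,Q^{1/n},Q^{-1})$, the family $\{\Gamma g_{\vecx,Q}:\vecx\in\TT^n\}$ is a fixed closed horosphere translated by that diagonal, which expands it as $Q\to\infty$.

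With this reduction in hand, the remaining task is to establish that, as $Q\to\infty$,
\begin{equation*}
\frac{1}{\vol\scrD}\,\vol\bigl\{\vecx\in\scrD:\hatZZ^{n+1}g_{\vecx,Q}\cap\fC(\scrB)=\emptyset\bigr\}\longrightarrow\mu\bigl\{g\in\GamG:\hatZZ^{n+1}g\cap\fC(\scrB)=\emptyset\bigr\}=P(0,\scrB),
\end{equation*}
uniformly in (and hence independently of) $\scrD$; since $E_\scrA(L)=P(0,\scrB)$, this is \eqref{LD1zzz}. This is the $n$-dimensional counterpart of the statement used for general $\scrD$ in Proposition~\ref{1111} and is supplied by \cite{Farey} (which builds on \cite{partI}). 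I would then recall that its proof has two ingredients. The first is equidistribution of the expanding closed horosphere $\{\Gamma g_{\vecx,Q}\}$ in $\GamG$, jointly with the torus coordinate $\vecx$: this both yields the $\scrD$-independence — one integrates $\ind{\vecx\in\scrD}$ against a test function on $\GamG$, the nonzero Fourier modes in $\vecx$ being killed by the expansion — and, being convergence to the Haar \emph{probability} measure, rules out escape of mass into the cusp. The second is that $\scrE:=\{g\in\GamG:\hatZZ^{n+1}g\cap\fC(\scrB)=\emptyset\}$ is a $\mu$-continuity set, so that the bounded (but discontinuous) indicator $\ind{\hatZZ^{n+1}g\cap\fC(\scrB)=\emptyset}$ may legitimately be used as a test function; for this one notes $\partial\scrE\subseteq\{g:\hatZZ^{n+1}g\cap\partial\fC(\scrB)\neq\emptyset\}$, observes that $\partial\fC(\scrB)$ has Lebesgue measure zero because $\partial\scrA$ (hence $\partial\scrB$) does, and applies a Siegel-type mean value identity on $\GamG$ to conclude $\mu(\partial\scrE)=0$.

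I expect the real obstacle to be the first ingredient. Because $g_{\vecx,Q}$ genuinely expands the horosphere rather than merely translating it, one needs equidistribution of expanding horospherical translates with enough uniformity to control the joint distribution on $\TT^n\times\GamG$ — this is precisely what forces the limit to be $\scrD$-independent and non-defective. By contrast, the rescaling of the first paragraph and the null-set argument for $\mu(\partial\scrE)=0$ are routine. Fortunately the required equidistribution (with effective error terms, in fact) is available off the shelf from \cite{Farey}, so the actual work reduces to the translation carried out above together with the verification that $\scrE$ is a continuity set for $\mu$.
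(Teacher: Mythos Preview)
Your proposal is correct and follows essentially the same route as the paper: you establish the equivalence $q_{\min}(\vecx,\delta,\scrA)>L\delta^{-n/(n+1)}\Leftrightarrow \scrF_Q\cap(\vecx+\sigma_Q^{-1/n}\scrB+\ZZ^n)=\emptyset$ with $Q=L\delta^{-n/(n+1)}$ and $\scrB=\sigma_1^{1/n}L^{1+1/n}\scrA$, then invoke the void-statistics limit for multidimensional Farey fractions from \cite{Farey}. The paper's proof is terser, simply citing Theorem~3 of \cite{Farey} after the reduction, whereas you additionally unpack the two ingredients behind that theorem (horospherical equidistribution on $\TT^n\times\GamG$ and the $\mu$-continuity of the event set); this extra detail is accurate but not needed for the proof of the proposition itself.
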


\begin{proof}
By the same token as in the one-dimensional case, we have
\begin{equation} \label{cal10}
q_{\min}(\vecx,\delta,\scrA) > L \delta^{-n/(n+1)} \Leftrightarrow \scrF_Q \cap \vecx+\sigma_Q^{-1/n} s \scrA+\ZZ^n =\emptyset ,
\end{equation}
with $Q=L \delta^{-n/(n+1)}$ and $s=\sigma_1^{1/n} L^{1+1/n}$.
Theorem 3 in \cite{Farey} (which is based on the results in \cite{partI}; see also Proposition \ref{PropFarey1} in Section \ref{secPigeon}) states that the volume of the set of $\vecx\in\scrD$ satisfying \eqref{cal10} converges to $P(0,s \scrA)$.
\end{proof}

In dimension $n>1$ we have no explicit expressions for $E_\scrA(L)$. We know it is continuous in $L$, and continuously differentiable if $\scrA$ is a ball, see \cite[Remark 2.6]{partI}. If $\scrA$ is a fixed ball, we have $P(0,s\scrA)\asymp s^{-n}$ as $s\to\infty$, see \cite[Section 1.3]{Strombergsson11}. We can obtain upper (resp. lower) tail estimates for general bounded $\scrA$ with non-empty interior by using a ball that is contained in (resp. contains)  $\scrA$. This yields
$E_\scrA(L)\asymp L^{-(n+1)}$ for $L\to\infty$.
For $n=1$ this is consistent with the tail of $E_{(-\frac12,\frac12)}(L)=\int_L^\infty \eta(s)\, ds$.

Let us now turn to the convergence of moments. 

\begin{prop}\label{propi}
For $\scrA\subset\RR^n$ bounded and $\scrD\subset[0,1]^n$, both with boundary of Lebesgue measure zero and non-empty interior, $\alpha\in\CC$ with $|\Re\alpha| <n+1$, we have
\begin{equation}\label{LD1Mom2345}
\lim_{\delta\to 0} \frac{\delta^{\alpha n/(n+1)} }{\vol\scrD} \int_{\scrD}   q_{\min}(\vecx,\delta,\scrA)^\alpha d\vecx=
\int_0^\infty L^{\alpha} \; dE_\scrA(L).
\end{equation}
\end{prop}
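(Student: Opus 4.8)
The plan is to deduce convergence of the $\alpha$-moments from the convergence of the distribution function in Proposition \ref{PropLD} together with uniform tail bounds that upgrade weak convergence to convergence of moments. Writing $Q = L\delta^{-n/(n+1)}$, the key observation from \eqref{cal10} is that, for each fixed $\vecx$ and $\delta$, the quantity $\delta^{n/(n+1)} q_{\min}(\vecx,\delta,\scrA)$ is the level $Q$ at which the rescaled Farey set $\scrF_Q$ first enters the shrinking box $\vecx + \sigma_Q^{-1/n} s\scrA + \ZZ^n$ with $s=\sigma_1^{1/n} L^{1+1/n}$; equivalently, setting $Y_\delta(\vecx) := \delta^{n/(n+1)} q_{\min}(\vecx,\delta,\scrA)$, Proposition \ref{PropLD} says that for every $L>0$ the distribution functions $\vol\{\vecx\in\scrD : Y_\delta(\vecx) > L\}/\vol\scrD$ converge to $E_\scrA(L)$, and since $E_\scrA$ is continuous this is convergence in distribution of $Y_\delta$ (under normalised Lebesgue measure on $\scrD$) to the law $dE_\scrA$. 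For $\Re\alpha < 0$ one still needs control as $L\to 0$: here $q_{\min}\geq 1$ always, so $Y_\delta(\vecx) \geq \delta^{n/(n+1)} \to 0$, and the small-$L$ behaviour must be handled by a separate lower bound on $q_{\min}$ which is crude but uniform; I expect $E_\scrA(L)\to 1$ polynomially fast is not enough by itself, so one instead bounds $q_{\min}(\vecx,\delta,\scrA)^{\Re\alpha}$ for $\Re\alpha<0$ using that $q_{\min}$ is, deterministically, at least of order $\delta^{-n/(n+1)}$ times a constant depending only on $\scrA$ on the complement of a set of $\vecx$ of small measure — more precisely, a pigeonhole/volume argument shows $\vol\{\vecx\in\scrD : q_{\min}(\vecx,\delta,\scrA) \leq R\} \ll (R\delta^{1/n})^n$ uniformly, which is exactly the input needed.

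The main work is the uniform integrability at the large-$L$ end. Since $\alpha$ may be complex we only need to bound $q_{\min}^{\Re\alpha}$, so assume first $\Re\alpha \in (0, n+1)$. We must show
\begin{equation}\label{pf-UI}
\limsup_{\delta\to 0} \frac{1}{\vol\scrD}\int_{\{\vecx\in\scrD\,:\,Y_\delta(\vecx) > L\}} Y_\delta(\vecx)^{\Re\alpha}\, d\vecx \longrightarrow 0 \quad\text{as } L\to\infty,
\end{equation}
together with the matching bound for $-\Re\alpha$ at the small-$L$ end. For \eqref{pf-UI} I would use the layer-cake formula to rewrite the truncated moment as $\Re\alpha \int_L^\infty u^{\Re\alpha-1}\, \frac{\vol\{\vecx\in\scrD : Y_\delta(\vecx) > u\}}{\vol\scrD}\, du$ plus a boundary term, and then bound the inner probability \emph{uniformly in $\delta$} by a function of $u$ that is $O(u^{-(n+1)})$. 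This uniform tail bound is the crucial estimate and is not a formal consequence of Proposition \ref{PropLD}; it requires an escape-of-mass statement on $\GamG$. Concretely, $\{\vecx\in\scrD : Y_\delta(\vecx) > u\}$ corresponds via \eqref{cal10} to the set of $\vecx$ for which a certain lattice coset $\hatZZ^{n+1} g_{\vecx,\delta}$ misses the cone $\fC(u^{1+1/n}\sigma_1^{1/n}\scrA)$ of volume $\asymp u^{n+1}$; by Minkowski-type reasoning (or the Siegel transform / mean-value estimate on the space of lattices), the measure of such $\vecx$ is $\ll u^{-(n+1)}$ with an implied constant depending only on $\scrA$ and $n$, uniformly for all small $\delta$. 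Once \eqref{pf-UI} and its companion are in hand, standard real analysis (convergence in distribution plus uniform integrability) gives $\int_\scrD Y_\delta(\vecx)^{\Re\alpha}\, d\vecx / \vol\scrD \to \int_0^\infty u^{\Re\alpha}\, dE_\scrA(u)$ for real $\alpha$, and the complex case follows by the same argument applied to real and imaginary parts after writing $Y_\delta^\alpha = Y_\delta^{\Re\alpha}(\cos(\Im\alpha\log Y_\delta) + \i\sin(\Im\alpha\log Y_\delta))$, using dominated convergence with the dominating function supplied by the $\Re\alpha$-bound. Finally one records that $\int_0^\infty L^\alpha\, dE_\scrA(L)$ is finite precisely because $E_\scrA(L)\asymp L^{-(n+1)}$ as $L\to\infty$ and $E_\scrA(L)\to 1$ with $1-E_\scrA(L)\ll L^n$ as $L\to 0$, so the condition $|\Re\alpha| < n+1$ is exactly what makes both the limit and the approximants finite.

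The step I expect to be the main obstacle is establishing the uniform-in-$\delta$ tail bound $\vol\{\vecx\in\scrD : Y_\delta(\vecx) > u\}/\vol\scrD \ll u^{-(n+1)}$: Proposition \ref{PropLD} only gives this pointwise-in-$u$ limit, not uniformity in $\delta$, and obtaining uniformity is exactly the reason (as the text notes) that moment convergence is not a formal corollary of the void-distribution moment results of \cite{Kargaev97}. I would derive it by going back to the equidistribution setup of \cite{Farey,partI}: the relevant family of shifted horospheres, integrated against the indicator of ``lattice misses a large cone,'' is bounded above by a Siegel-transform mean-value computation on $\GamG$ that is valid for the actual finite-$\delta$ averages (not just in the limit), giving a constant times the reciprocal volume of the cone. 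Alternatively, and perhaps more elementary, a direct Farey-fraction counting argument works: if $\scrF_Q$ misses $\vecx+\sigma_Q^{-1/n} s\scrA+\ZZ^n$ then in particular there is no $q\leq Q$ with $\operatorname{dist}(q\vecx, \ZZ^n)$ small in the $\scrA$-shape, and a pigeonhole bound on the number of such ``$\scrA$-badly-approximable up to $Q$'' points $\vecx$ gives the $u^{-(n+1)}$ decay with a uniform constant. Either route closes the gap; the rest is the routine real-analysis packaging described above.
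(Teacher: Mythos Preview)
Your strategy matches the paper's exactly: split into $\Re\alpha=0$, $>0$, $<0$; for $\Re\alpha=0$ invoke Proposition~\ref{PropLD} directly; for $\Re\alpha\neq 0$ use layer-cake and supply uniform-in-$\delta$ tail bounds. For $\Re\alpha>0$ the paper obtains $\vol\{\vecx\in[0,1]^n:Y_\delta(\vecx)>L\}\ll L^{-(n+1)}$ by your route (a): the event translates to the lattice $\ZZ^{n+1}h(\vecx)a(\delta^{-n/(n+1)})$ missing the homothetically dilated cone $L\,\fC(\sigma_1^{1/n}\scrA)$, which contains a ball of radius $\asymp L$, forcing $v_1\gg L$ in the Iwasawa decomposition; an escape-of-mass estimate on the Siegel set (from \cite{Strombergsson11,Kim23}) then gives the $L^{-(n+1)}$ bound. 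Your alternative (b) is not pursued in the paper and would be hard to make sharp without essentially reproducing the lattice argument.

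Two arithmetic slips on the $\Re\alpha<0$ side. The bound $\vol\{\vecx:q_{\min}(\vecx,\delta,\scrA)\le R\}\ll(R\delta^{1/n})^n$ is wrong (for $n\ge 2$ it even blows up as $\delta\to 0$ after substituting $R=L\delta^{-n/(n+1)}$). The straightforward count---there are $\ll R^{n+1}$ rationals of denominator $\le R$ in $[0,1)^n$, each contributing a translate of $-\delta\scrA$ of measure $\asymp\delta^n$---gives $\ll R^{n+1}\delta^n$, hence $\vol\{\vecx:Y_\delta(\vecx)\le L\}\ll L^{n+1}$ uniformly in $\delta$, which is exactly the paper's computation \eqref{lalala}--\eqref{CME}. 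Correspondingly $1-E_\scrA(L)\ll L^{n+1}$ (not $L^n$) as $L\to 0$, since the cone $\fC(\sigma_1^{1/n}L^{1+1/n}\scrA)$ has volume $\asymp L^{n+1}$; with only an $L^n$ bound you would recover merely $\Re\alpha>-n$, not the full range $\Re\alpha>-(n+1)$. Once these exponents are fixed your argument goes through and coincides with the paper's.
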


Note that the For $n=1$, Proposition \ref{propi} specialises to Proposition \ref{propo}.
The proof will require the notion of a Siegel set defined as follows.
For $$\vecu=(u_{12},\ldots,u_{1(n+1)},u_{23},\ldots,u_{2(n+1)},\ldots,u_{n(n+1)})\in\RR^{n(n+1)/2}$$ and 
$$\vecv=(v_1,v_2,\ldots,v_{n+1}) \in \scrT, \quad \text{with}\quad \scrT=\{(v_1,\ldots,v_{n+1})\in\RR_{>0}^{n+1}, v_1\cdots v_{n+1}=1\},$$ 
let
\begin{equation}\label{uvdef}
n(\vecu):=\left(\begin{matrix} 1 & u_{12} & \cdots & u_{1(n+1)}\\ & \ddots & & \vdots \\ & & 1 & u_{n(n+1)} \\ & & & 1 \end{matrix}\right),\quad a(\vecv):=\left(\begin{matrix} v_1 &  &  & \\ & v_2 & &  \\ & & \ddots & \\ & & & v_{n+1} \end{matrix}\right).
\end{equation}
The Iwasawa decomposition of $g\in G$ is then given by
\begin{equation} \label{Iwa}
g=n(\vecu)a(\vecv)k,
\end{equation}
where $\vecu\in\RR^{n(n+1)/2}$, $\vecv\in\scrT$ and $k\in \SO(n+1)$. 
The Siegel set
\begin{equation}\label{Siegel}
\scrS_\Gamma:=\left\{ n(\vecu)a(\vecv)k: \vecu\in[-\tfrac{1}{2},\tfrac{1}{2}]^{n(n+1)/2}, \; 0<v_{j+1}\leq\frac{2}{\sqrt{3}}v_{j} ,\; k\in \SO(n+1) \right\} \subset G
\end{equation}
has the property that it contains a fundamental domain $\scrF_\Gamma\subset G$ of the $\Gamma$-action and can be covered with a finite number of $\Gamma$-translates of $\scrF_\Gamma$. We fix $\scrF_\Gamma$ and set $v_j(\Gamma g)=v_j(g)=v_j$, with $g=n(\vecu)a(\vecv)k\in\scrF_\Gamma$. 

\begin{proof}[Proof when $\Re\alpha=0$.]
Proposition \ref{PropLD} implies (and, given the continuity of $E_\scrA(L)$ in $L$, is in fact equivalent to) the statement that for any bounded continuous function $h:\RR_{\geq 0} \to \CC$,
\begin{equation}\label{LD1Mom23450001}
\lim_{\delta\to 0} \frac{1}{\vol\scrD} \int_{\scrD}   h\left(\delta^{n/(n+1)}q_{\min}(\vecx,\delta,\scrA)\right) d\vecx=
\int_0^\infty h(L) \; dE_\scrA(L).
\end{equation}
Now take $h(x)=x^\alpha$ and the claim is proved.
\end{proof}

\begin{proof}[Proof when $\Re\alpha>0$.]
We have
\begin{equation}
\delta^{\alpha n/(n+1)} \int_{\scrD} q_{\min}(\vecx,\delta,\scrA)^\alpha d\vecx = 
 \alpha \int_0^\infty L^{\alpha-1} \vol\left\{ \vecx\in \scrD :  \delta^{n/(n+1)} q_{\min}(\vecx,\delta,\scrA) > L \right\} dL .
\end{equation}
In view of Proposition \ref{PropLD}, for any $R>r>0$,
\begin{equation}
\lim_{\delta\to 0} \int_r^R L^{\alpha-1} \vol\left\{ \vecx\in \scrD :  \delta^{n/(n+1)}  q_{\min}(\vecx,\delta,\scrA) > L \right\} dL
\\ =
\vol\scrD \;  \int_r^R L^{\alpha-1} E_\scrA(L)\, dL.
\end{equation}
Therefore, all that remains to be shown is that
\begin{equation}\label{the1st}
\lim_{R\to\infty} \limsup_{\delta\to 0} \int_R^\infty L^{\Re\alpha-1} \vol\left\{ \vecx\in \scrD :  \delta^{n/(n+1)}  q_{\min}(\vecx,\delta,\scrA) > L \right\} dL = 0,
\end{equation}
\begin{equation}\label{the2nd}
\lim_{r\to 0} \limsup_{\delta\to 0} \int_0^r L^{\Re\alpha-1} \vol\left\{ \vecx\in \scrD :  \delta^{n/(n+1)}  q_{\min}(\vecx,\delta,\scrA) > L \right\} dL = 0.
\end{equation}
Relation \eqref{the2nd} is immediate since the integrand is bounded above by $L^{\Re\alpha-1}$.
We will establish \eqref{the1st} by proving that there is a constant $C$ such that for all $\delta>0$, $L\geq 1$, we have
\begin{equation}\label{bibo}
 \vol\left\{ \vecx\in [0,1]^n :  \delta^{n/(n+1)}  q_{\min}(\vecx,\delta,\scrA) > L  \right\}
 \leq C L^{-(n+1)} .
\end{equation}
To this end, recall the observation \eqref{cal10} and furthermore (the starting point of \cite{Farey}) that
\begin{equation} \label{cal1000}
\scrF_Q \cap \vecx+\sigma_Q^{-1/n} s \scrA+\ZZ^n =\emptyset 
\Leftrightarrow  \hatZZ^{n+1}h(\vecx) a(Q) \cap \fC(s\scrA) =\emptyset ,
\end{equation}
where
\begin{equation}
h(\vecx)=\begin{pmatrix} 1_n & \trans\vecnull \\ -\vecx & 1 \end{pmatrix},\qquad a(y)=\begin{pmatrix} y^{1/n} 1_n & \trans\vecnull \\ \vecnull & y^{-1} \end{pmatrix} .
\end{equation}
With the choice $Q=L \delta^{-n/(n+1)}$ and $s=\sigma_1^{1/n} L^{1+1/n}$ this becomes
\begin{equation} \label{truu}
\hatZZ^{n+1}h(\vecx) a(\delta^{-n/(n+1)}) \cap \fC(\sigma_1^{1/n} L^{1+1/n} \scrA) a(L^{-1}) =\emptyset ,
\end{equation}
where we note that
\begin{equation}
\fC(\sigma_1^{1/n} L^{1+1/n} \scrA) a(L^{-1}) = L \fC(\sigma_1^{1/n} \scrA) 
\end{equation}
is the homothetic dilation by $L$ of the fixed cone $\fC(\sigma_1^{1/n} \scrA)$. 

Since $\fC(\sigma_1^{1/n} \scrA)$ is a cone with vertex at the origin, relation \eqref{truu} is equivalent to
\begin{equation} \label{truu22}
(\ZZ^{n+1}\setminus\{0\})\; h(\vecx) a(\delta^{-n/(n+1)}) \cap L \fC(\sigma_1^{1/n} \scrA)  =\emptyset .
\end{equation}
Because $\scrA$ has non-empty interior, $\fC(\sigma_1^{1/n} \scrA)$ contains an open ball $\scrB_0$ of radius $r_0>0$ not containing the origin, and hence also $L\scrB_0\subset L\fC(\sigma_1^{1/n} \scrA)$. Now, \cite[Lemma 2.1]{Strombergsson11} tells us, given $r_0$ there is a constant $r_1>0$ such that for all $L>0$, we have $v_1(g) \geq r_1 L$ for any lattice $\ZZ^{n+1} g$ with $g=n(\vecu)a(\vecv)k\in\scrS_\Gamma$ which does not intersect a ball of radius $r_0 L$. The left hand side of \eqref{bibo} is thus bounded above by
\begin{equation}\label{bibo2}
 \vol\left\{ \vecx\in [0,1]^n :  v_1\big(\Gamma h(\vecx) a(Q)\big) \geq r_1 L \right\} .
\end{equation}
An upper bound for \eqref{bibo2} follows from the proof of Proposition 5.1 (case B1) in \cite{Kim23}. For $1\leq s\leq n$ and $\underline{l}=(l_1,\cdots,l_s)\in\ZZ^s_{\ge0}$, we set (cf.~\cite[(5.5)]{Kim23})
\begin{equation} \label{Xilsdef}
\Xi^s_{\underline{l}}:=\left\{g\in\scrF_\Gamma :s(g)=s, \delta_{n+1} 2^{l_i}<  v_i(g)\leq\delta_{n+1} 2^{l_i+1} \;(i=1,\cdots,s)\right\}
\end{equation}
with $\delta_d=d4^d$ and $s(g)$ is the largest $i$ for which $v_i(g)> 1$. With this, the estimate leading to \cite[(5.21)]{Kim23} shows that \eqref{bibo2} is bounded above by
\begin{equation}\label{bibo3}
\begin{split}
\sum_{s=1}^n \sum_{\substack{\underline{l}\in\ZZ^s_{\ge0}\\ \delta_{n+1} 2^{l_1+1} \geq r_1 L }}
&  \vol\left\{ \vecx\in [0,1]^n :  \exists\gamma\in\Gamma \text{ s.t. }\gamma h(\vecx) a(Q) \in \Xi^s_{\underline{l}} \right\} \\
 &\ll  \sum_{s=1}^n \sum_{\substack{\underline{l}\in\ZZ^s_{\ge0}\\ \delta_{n+1} 2^{l_1+1} \geq r_1 L}} \prod_{i=1}^s 2^{-(n+1)l_i} \ll L^{-(n+1)},
 \end{split}
\end{equation}
and therefore
\begin{equation}\label{bibo22}
 \vol\left\{ \vecx\in [0,1]^n :  v_1\big(\Gamma h(\vecx) a(Q)\big) \geq r_1 L \right\} \ll L^{-(n+1)}.
\end{equation}
This yields \eqref{bibo} and the proof for positive $\Re\alpha$ is complete. 
\end{proof}

\begin{proof}[Proof when $\Re\alpha<0$.]
We now write
\begin{equation}
\delta^{\alpha n/(n+1)} \int_{\scrD} q_{\min}(\vecx,\delta,\scrA)^\alpha d\vecx = 
 -\alpha \int_0^\infty L^{\alpha-1} \vol\left\{ \vecx\in \scrD :  \delta^{n/(n+1)}  q_{\min}(\vecx,\delta,\scrA) \leq L \right\} dL .
\end{equation}
The argument is analogous to the previous case of positive $\alpha$. We now need to establish
\begin{equation}\label{the1st3}
\lim_{R\to\infty} \limsup_{\delta\to 0} \int_R^\infty L^{\Re\alpha-1} \vol\left\{ \vecx\in \scrD :  \delta^{n/(n+1)}  q_{\min}(\vecx,\delta,\scrA) \leq L \right\} dL = 0,
\end{equation}
\begin{equation}\label{the2nd3}
\lim_{r\to 0} \limsup_{\delta\to 0} \int_0^r L^{\Re\alpha-1} \vol\left\{ \vecx\in \scrD :  \delta^{n/(n+1)}  q_{\min}(\vecx,\delta,\scrA) \leq L  \right\} dL = 0.
\end{equation}
Here \eqref{the1st3} is immediate since the integrant is bounded above by $L^{\Re\alpha-1}$, and it remains to check \eqref{the2nd3}. Instead of \eqref{truu} we must now satisfy
\begin{equation} \label{truu2}
\hatZZ^{n+1}h(\vecx) a(\delta^{-n/(n+1)}) \cap L \fC(\sigma_1^{1/n} \scrA) \neq \emptyset ,
\end{equation}
which leads us to the volume $V(L,\delta^{-n/(n+1)})$ of $\vecx\in[0,1]^n$ so that we have an element in $\hatZZ^{n+1}h(\vecx) a(\delta^{-n/(n+1)})$ of norm at most $b L$, for some constant $b$ depending only on the choice of $\scrA$. 
If we set $y=\delta^{-n/(n+1)}>1$ and denote by $\chi_R$ the characteristic function of a ball of radius $R=b L<1$ centered at the origin, then the desired volume is bounded above by 
\begin{equation}\label{lalala}
\begin{split}
V(L,y) & \leq \int_{[0,1]^n} \sum_{\substack{(\vecp,q)\in\hatZZ^{n+1}\\ q\geq 0}} \chi_{bL}((\vecp,q) h(\vecx) a(y)) \,d\vecx \\
& = \int_{[0,1]^n} \sum_{\substack{(\vecp,q)\in\hatZZ^{n+1}\\ q>0}} \chi_{bL}((\vecp-q \vecx) y^{1/n} ,q y^{-1})  \,d\vecx \\
& = \int_{[0,1]^n} \sum_{\vecm\in\ZZ^n}  \sum_{\substack{(\vecp,q)\in\hatZZ^{n+1}\\ 0\leq p_j<q}} \chi_{bL}((\vecp+q\vecm-q \vecx) y^{1/n} ,q y^{-1})  \,d\vecx \\
& \leq  \int_{\RR^n} \sum_{q=1}^\infty q^n  \chi_{bL}(\vecx q y^{1/n} ,q y^{-1})  \,d\vecx \\
& = y^{-1} L^n  \int_{\RR^n} \sum_{q=1}^\infty \chi_b(\vecx ,(L y)^{-1} q) \, d\vecx \\
& \ll L^{n+1} 
\end{split}
\end{equation}
where the implied constant is independent of $0<L\leq b^{-1}$ and $y>1$. We conclude
\begin{equation}\label{CME}
\vol\left\{ \vecx\in \scrD :  \delta^{n/(n+1)}  q_{\min}(\vecx,\delta,\scrA) \leq L \right\}  \ll L^{n+1}
\end{equation}
for all $0<\delta< 1$ and $0<L\leq b^{-1}$. Hence \eqref{the2nd3} follows for $-(n+1)<\Re\alpha<0$.
\end{proof}

\section{Discrete sampling}\label{secDiscrete}

So far we have considered $\vecx$ as a random point uniformly distributed (with respect to the Lebesgue measure) in $\scrD\subset[0,1]^n$. We will replace this with a discrete sampling over points $\vecx_{\vecj,N}=N^{-1} \vecj$ in $\scrD$, with $\vecj$ ranging over $\ZZ^n$. We will also allow an additional shift by a fixed $\vecx_0\in\RR^n$.

\begin{prop}\label{PropLDdiscrete}
For $\scrA\subset\RR^n$ bounded and $\scrD\subset[0,1]^n$, both with boundary of Lebesgue measure zero and non-empty interior, $L>0$, $\vecx_0\in\RR^n$, $c>0$, we have
\begin{equation}\label{LD1aaaa}
\lim_{\substack{\delta\to 0, N\to\infty\\ c\delta^{-1}\leq N}} \frac{\#\left\{ \vecj \in \ZZ^n\cap N\scrD :  \delta^{n/(n+1)}  q_{\min}(\vecx_0+N^{-1} \vecj ,\delta,\scrA) > L  \right\}}{N^n \vol\scrD } = E_\scrA(L)
\end{equation}
with $E_\scrA(L)$ as in Proposition \ref{PropLD}.
\end{prop}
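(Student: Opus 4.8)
The plan is to run the same translation to lattice-point geometry as in the proof of Proposition \ref{PropLD}, but with the Lebesgue integral over $\vecx$ replaced by the discrete average over $\vecx_0+N^{-1}\vecj$, and then invoke an equidistribution result for the $\ZZ^n$-action by the time-one map of the expanding horospherical subgroup rather than the full $\RR^n$-action. Concretely, by the equivalence \eqref{cal10}--\eqref{cal1000}, the event $\delta^{n/(n+1)}q_{\min}(\vecx_0+N^{-1}\vecj,\delta,\scrA)>L$ is equivalent to $\hatZZ^{n+1}h(\vecx_0+N^{-1}\vecj)\,a(Q)\cap\fC(s\scrA)=\emptyset$ with $Q=L\delta^{-n/(n+1)}$ and $s=\sigma_1^{1/n}L^{1+1/n}$, so the left-hand side of \eqref{LD1aaaa} is
\begin{equation}
\frac{1}{N^n\vol\scrD}\#\left\{\vecj\in\ZZ^n\cap N\scrD:\hatZZ^{n+1}h(\vecx_0+N^{-1}\vecj)\,a(Q)\cap\fC(s\scrA)=\emptyset\right\}.
\end{equation}
First I would factor $h(\vecx_0+N^{-1}\vecj)=h(N^{-1}\vecj)\,h(\vecx_0)$ (using the one-parameter group property of $h$, together with invariance of $\hatZZ^{n+1}$ under the lower-triangular unipotent $h(\vecx_0)$ only if $\vecx_0\in\ZZ^n$ — in general one absorbs $h(\vecx_0)$ as a fixed right translation of the base point and treats $\hatZZ^{n+1}h(\vecx_0)$ as a fixed unimodular lattice translate), and then write $h(N^{-1}\vecj)\,a(Q)=a(Q)\,h(Q^{1+1/n}N^{-1}\vecj)$ since conjugation of the horospherical unipotent by $a(Q)$ rescales the off-diagonal entries. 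The key point is the choice of $N$: one wants $Q^{1+1/n}N^{-1}$ to be (comparable to) an integer or a fixed constant so that $\vecj\mapsto h(Q^{1+1/n}N^{-1}\vecj)$ is a $\ZZ^n$-action by a fixed unipotent generator. The constraint $c\delta^{-1}\le N$ with $Q=L\delta^{-n/(n+1)}$ gives $Q^{1+1/n}N^{-1}\le c^{-1}L^{1+1/n}\delta^{1/(n+1)}\to 0$, i.e.\ the lattice spacing of the sampled horospherical orbit shrinks; this is exactly the regime in which the closed horosphere pieces of length $N$ equidistribute.

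The main work is then the equidistribution input. I would invoke the sparse/discrete equidistribution theorem for expanding translates of pieces of closed horospheres — this is the $\ZZ^n$-analogue of Theorem 3 in \cite{Farey}, proved via the methods of \cite{partI} and in the style used by Pattison \cite{Pattison23}; it asserts that for a test function $f$ on $\GamG$ that is the indicator of $\{\Lambda:\Lambda\cap\fC(s\scrA)=\emptyset\}$ (whose boundary has Haar measure zero, since $\partial\scrA$ and hence $\partial\fC(s\scrA)$ is null and $P(0,\cdot)$ is continuous), the discrete average $N^{-n}\sum_{\vecj\in\ZZ^n\cap N\scrD}f(\Gamma\,a(Q)h(Q^{1+1/n}N^{-1}\vecj)h(\vecx_0))$ converges to $\vol\scrD\cdot\mu(f)=\vol\scrD\cdot P(0,s\scrA)=\vol\scrD\cdot E_\scrA(L)$ as $\delta\to 0$, $N\to\infty$ with $c\delta^{-1}\le N$. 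As in the continuous case, one needs to complement the equidistribution of the "interior'' with an escape-of-mass / tightness estimate to handle the fact that $f$ is an indicator rather than compactly supported continuous: sandwich $f$ between continuous functions and control the discrepancy using the cusp excursion bound, for which the $\ZZ^n$-version of the estimate \eqref{bibo22} — bounding $N^{-n}\#\{\vecj\in\ZZ^n\cap N\scrD: v_1(\Gamma\,a(Q)h(Q^{1+1/n}N^{-1}\vecj)h(\vecx_0))\ge T\}\ll T^{-(n+1)}$ uniformly in the relevant range — does the job, again following the Siegel-set decomposition of \cite{Kim23} as in the proof of Proposition \ref{propi}.

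The step I expect to be the genuine obstacle is establishing the discrete equidistribution statement uniformly as both $\delta\to 0$ and $N\to\infty$ subject only to $c\delta^{-1}\le N$ — in particular allowing $N$ to be as small as $\asymp\delta^{-1}$, where the horospherical orbit is sampled at spacing $\asymp\delta^{1/(n+1)}$ rather than at spacing tending to zero faster. Unlike the Lebesgue case, where Theorem 3 of \cite{Farey} can be quoted verbatim, here one must verify that the discretisation error in replacing $\int_{N\scrD}$ by $\sum_{\vecj\in\ZZ^n\cap N\scrD}$ along the expanding horosphere is negligible; this requires either a spectral-gap / effective-equidistribution argument with an explicit error term (available, as the paper notes, since horospherical equidistribution comes with precise rates) that beats the sampling density, or a Fourier-analytic argument à la Pattison exploiting oscillation in $\vecj$. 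Handling a general shift $\vecx_0\notin\ZZ^n$ adds only the cosmetic complication of a fixed base-point translate, which does not affect equidistribution; and the boundary terms from $\vecj\in\ZZ^n\cap N\partial\scrD$ are $O(N^{n-1})=o(N^n)$ since $\partial\scrD$ is null and $\scrD$ has non-empty interior. Once the equidistribution and tightness are in place, the proof concludes exactly as in Proposition \ref{PropLD}.
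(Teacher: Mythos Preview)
Your reduction to a discrete equidistribution statement on $\GamG$ is exactly the paper's approach: the proof of Proposition~\ref{PropLDdiscrete} just quotes the pigeonhole analogue of the Farey void statistics (Proposition~\ref{PropFarey2}), which in turn rests on the discrete horosphere equidistribution (Proposition~\ref{equiThm2}). Your identification of the conjugation $h(\vecx_0+N^{-1}\vecj)a(Q)=h(\vecx_0)a(Q)\,h(Q^{1+1/n}N^{-1}\vecj)$, of the tightness issue, and of the boundary terms is all correct.

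The genuine gap is an arithmetic slip with a real consequence. With $Q=L\delta^{-n/(n+1)}$ one has $Q^{(n+1)/n}=L^{(n+1)/n}\delta^{-1}$, so under $c\delta^{-1}\le N$ the sampling step along the horosphere satisfies
\[
Q^{1+1/n}N^{-1}=L^{(n+1)/n}(\delta N)^{-1}\le c^{-1}L^{(n+1)/n},
\]
which is \emph{bounded} but in general does \emph{not} tend to zero (and there is no factor $\delta^{1/(n+1)}$). In the critical regime $N\asymp\delta^{-1}$ the sample points sit at fixed spacing on the expanding horosphere, so the discrete sum is \emph{not} close to the continuous integral, and no effective rate for the continuous horosphere will ``beat the sampling density'': a qualitatively different argument is needed.

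The paper handles this as follows (Proposition~\ref{equiThm2}). After establishing tightness via the estimate \eqref{lalala2}, pass to a subsequence along which $Q_i^{1+1/n}N_i^{-1}\to\tau_0\in[0,c^{-1/n}]$. If $\tau_0=0$ the Riemann-sum comparison with the continuous horospherical average is legitimate and one quotes the continuous result. If $\tau_0>0$, any weak limit $\nu$ of the discrete measures is invariant under the $\ZZ^n$-action $g\mapsto g\,h(\tau_0\vecj)$. One then uses an $\epsilon$-thickening trick (attributed in the paper to Einsiedler): set $\nu_i^\epsilon(f)=\nu_i(f_\epsilon)$ with $f_\epsilon(g)=\epsilon^{-n}\int_{[-\epsilon/2,\epsilon/2]^n}f(gh(\vecx))\,d\vecx$, and note that the continuous horospherical measure $\mu_i$ satisfies $\mu_i=\epsilon^n\nu_i^\epsilon+(1-\epsilon^n)\overline\nu_i^\epsilon$ for a complementary probability measure $\overline\nu_i^\epsilon$. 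Since $\mu_i\to\mu$, both subsequential limits $\nu^\epsilon$ and $\overline\nu^\epsilon$ are $H_{\tau_0}$-invariant and combine convexly to $\mu$; mixing of the $G$-action makes the $H_{\tau_0}$-action $\mu$-ergodic, so by extremality $\nu^\epsilon=\mu$. Letting $\epsilon\to0$ and using uniform continuity of $f$ gives $\nu=\mu$. This is a soft ergodic argument; neither a spectral gap nor a Fourier-analytic estimate is used.
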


\begin{proof}
This follows from the same argument as the proof of Proposition \ref{PropLD}, if we replace the convergence of the void statistics for Farey fractions from \cite{Farey} with Proposition \ref{PropFarey1} in Section \ref{secPigeon}, a new result on pigeonhole statistics.
\end{proof}

In the one-dimensional case $n=1$, with $\delta=1/N$, $\scrD=[0,1)$, $\scrA=[0,1)$ and $\vecx_0=0$, \eqref{LD1aaaa} simplifies to 
\begin{equation}\label{LD1aaaabbb}
\lim_{N\to\infty} \frac{1}{N} \#\left\{ j=0,\ldots,N-1 :  N^{-1/2} q_{\min}(N^{-1} j ,N^{-1},[0,1)) > L \right\} = \int_L^\infty \eta(s) ds .
\end{equation}
Figure \ref{fig1} gives a comparison with numerical data for $N=3000$, which was generated by Mathematica with the input

\centerline{}
\centerline{\framebox{\includegraphics[width=0.95\textwidth]{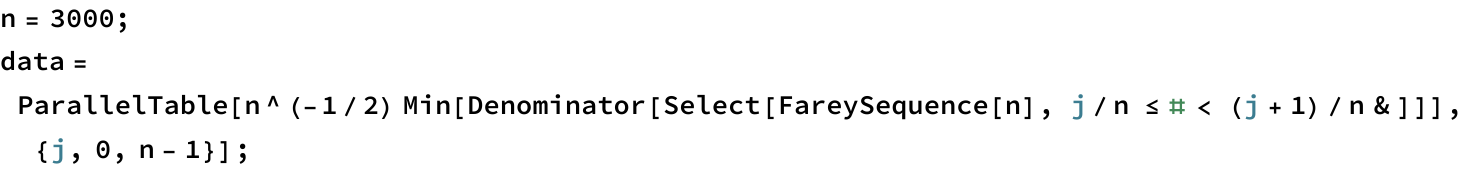}}}
\centerline{}

\noindent
We have here used the fact that $q_{\min}(N^{-1} j ,N^{-1},[0,1))\leq N$ for rationals in an interval of length $1/N$. 

Let us now turn to the convergence of moments.

\begin{prop}\label{propidiscrete}
For $\scrA\subset\RR^n$ bounded and $\scrD\subset[0,1]^n$, both with boundary of Lebesgue measure zero and non-empty interior, $\alpha\in\CC$ with $|\Re\alpha|<n+1$, $\vecx_0\in\RR^n$, $c>0$, we have
\begin{equation}\label{LD1Mom6789}
\lim_{\substack{\delta\to 0, N\to\infty\\ c\delta^{-1}\leq N}}  \frac{\delta^{\alpha n/(n+1)} }{N^n \vol\scrD} \sum_{\vecj \in \ZZ^n\cap N\scrD}   q_{\min}(\vecx_0+N^{-1} \vecj ,\delta,\scrA)^\alpha =
\int_0^\infty L^{\alpha} \; dE_\scrA(L).
\end{equation}
\end{prop}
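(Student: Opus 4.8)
The plan is to mirror the proof of Proposition \ref{propi} almost verbatim, with Lebesgue volume replaced by the normalised counting measure on the lattice $\vecx_0 + N^{-1}\ZZ^n$ and the continuous limit $\delta\to 0$ replaced by the joint limit $\delta\to 0$, $N\to\infty$ with $c\delta^{-1}\le N$. First I would reduce, exactly as in the case $\Re\alpha=0$ of Proposition \ref{propi}, the statement to the fact that Proposition \ref{PropLDdiscrete} is equivalent to convergence of $\frac{1}{N^n\vol\scrD}\sum_{\vecj\in\ZZ^n\cap N\scrD} h(\delta^{n/(n+1)}q_{\min}(\vecx_0+N^{-1}\vecj,\delta,\scrA))$ to $\int_0^\infty h(L)\,dE_\scrA(L)$ for every bounded continuous $h\colon\RR_{\ge0}\to\CC$ — here the continuity of $E_\scrA$ (noted after Proposition \ref{PropLD}) is what makes the equivalence go through. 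Taking $h(x)=x^\alpha$ disposes of the purely imaginary case.

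For $\Re\alpha>0$ I would write, using the layer-cake formula for the discrete average,
\begin{equation}
\delta^{\alpha n/(n+1)}\sum_{\vecj\in\ZZ^n\cap N\scrD} q_{\min}(\cdots)^\alpha
= \alpha\int_0^\infty L^{\alpha-1}\,\#\left\{\vecj\in\ZZ^n\cap N\scrD : \delta^{n/(n+1)}q_{\min}(\vecx_0+N^{-1}\vecj,\delta,\scrA)>L\right\}dL,
\end{equation}
split the integral into $\int_0^r$, $\int_r^R$, $\int_R^\infty$, handle the middle piece by Proposition \ref{PropLDdiscrete} (after dividing by $N^n\vol\scrD$), bound the small-$L$ piece trivially by $L^{\Re\alpha-1}$, and for the tail piece prove a uniform bound
\begin{equation}
\frac{1}{N^n}\#\left\{\vecj\in\ZZ^n\cap N[0,1]^n : \delta^{n/(n+1)}q_{\min}(\vecx_0+N^{-1}\vecj,\delta,\scrA)>L\right\}\ll L^{-(n+1)}
\end{equation}
valid for all admissible $\delta, N$ and all $L\ge1$. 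The continuous analogue of this was \eqref{bibo}–\eqref{bibo22}, proved via the $v_1$-cusp excursion estimate and the Siegel-set decomposition borrowed from \cite{Kim23}. The discrete version should follow because the relevant quantity is again $\#\{\vecj : v_1(\Gamma h(\vecx_0+N^{-1}\vecj)a(Q))\ge r_1 L\}$ with $Q=L\delta^{-n/(n+1)}$, and the arguments of \cite{Kim23} (case B1) are already phrased so as to count lattice points, not measure sets; the condition $c\delta^{-1}\le N$, equivalently $N\gg Q^{(n+1)/n}\cdot(\text{something})$, is exactly what lets one compare the discrete count on the scale $N^{-1}$ with the volume on the scale $Q^{-1}$ appearing in the horospherical equidistribution. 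For $\Re\alpha<0$ I would instead use the complementary layer-cake identity with $\{q_{\min}\le L\}$ and reproduce \eqref{lalala}: the key inequality
\begin{equation}
\frac{1}{N^n}\#\left\{\vecj\in\ZZ^n\cap N[0,1]^n : \delta^{n/(n+1)}q_{\min}(\vecx_0+N^{-1}\vecj,\delta,\scrA)\le L\right\}\ll L^{n+1}
\end{equation}
for $0<L\le b^{-1}$, uniformly in $\delta, N$, comes from the same Fourier-free counting as in \eqref{lalala}, now summing the characteristic function of a ball of radius $bL$ over $\vecj$ rather than integrating over $\vecx$; the passage from the sum over $\vecj\in\ZZ^n$ to the sum over $\vecm\in\ZZ^n$ (reducing $\vecp$ modulo $q$) is replaced by noting that for $q\le Q\le cN$ the points $\{q(\vecx_0+N^{-1}\vecj)\bmod 1\}$ are spread out on a scale $\gg (qN^{-1})$, so the number of $\vecj$ with $(\vecp+q\vecm-q\vecx_0-qN^{-1}\vecj)y^{1/n}$ small is $\ll (N/q)^n\cdot(\text{ball volume})$, matching the $q^n$ weight in \eqref{lalala} after dividing by $N^n$.

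The main obstacle, and the only place where genuinely new work is needed beyond transcribing Section \ref{secArb}, is establishing the two uniform tail bounds in the \emph{discrete} setting with the single quantifier $c\delta^{-1}\le N$ rather than separate control of $\delta$ and $N$: one must verify that the Siegel-set/$v_1$ estimates of \cite{Kim23} and the elementary covering argument of \eqref{lalala} remain valid when $\vol$ over $[0,1]^n$ is replaced by $N^{-n}\#(\ZZ^n\cap N[0,1]^n)$ uniformly as $\delta\to0$, $N\to\infty$. I expect this to be routine given that Proposition \ref{PropFarey1} (pigeonhole statistics) already packages the equidistribution input in the discrete form, so that the truncation arguments only ever see finitely many cusp neighbourhoods and the discretisation error is controlled by $Q/N\le c^{1/?}\cdot(\text{bounded})$; nonetheless it is the step that cannot simply be cited from the continuous proof. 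Everything else — the layer-cake splittings, the trivial small/large-$L$ bounds, the $\Re\alpha=0$ reduction — is identical to the proof of Proposition \ref{propi}.
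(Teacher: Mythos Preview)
Your overall architecture — layer-cake, three-way split, middle piece via Proposition \ref{PropLDdiscrete}, trivial bound on one end, uniform tail bound on the other — is exactly right and matches the paper. The divergence is in how the uniform tail bounds are obtained. You propose to re-run the Siegel-set/$v_1$ estimate of \cite{Kim23} and the counting argument \eqref{lalala} \emph{directly} in the discrete setting, summing over $\vecj\in\ZZ^n\cap N[0,1]^n$. The paper does something slicker: it never re-proves the hard estimates. Instead it fixes a ball $\scrA_0\subset\scrA$ (resp.\ $\scrA_1\supset\scrA$), uses the monotonicity of $q_{\min}$ in $\scrA$ together with the fact that a small shift $\vecr$ of the centre can be absorbed into a dilation of the ball (here $c\delta^{-1}\le N$ guarantees $|\vecr|\le\epsilon/(\delta N)$ is small on the right scale), and then \emph{averages over $\vecr$} to turn the discrete count into a continuous volume. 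This reduces both tail estimates to the already-established \eqref{bibo} and \eqref{CME} with $\scrA$ replaced by $\tfrac12\scrA_0$ or $2\scrA_1$. So the paper's key idea is a smoothing/comparison step that you do not have.

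Your route is not wrong, but one of your justifications is: the estimates in \cite{Kim23} (case B1) are \emph{volume} estimates for $\{\vecx\in[0,1]^n:\ldots\}$, not discrete counts over $\vecj$, so they are not ``already phrased so as to count lattice points'' in the sense you need. Adapting them discretely would itself require something like the smoothing trick above (or a genuine reworking of the cusp-excursion argument with the horospherical parameter restricted to $N^{-1}\ZZ^n$), so you have understated the work involved for $\Re\alpha>0$. For $\Re\alpha<0$ you are on firmer ground: the paper in fact records, after the main proof, exactly the discrete inequality \eqref{lalala2} you sketch as an alternative — but even there its proof is the one-line smoothing reduction to \eqref{lalala}, not the direct $\vecj$-summation you outline.
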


\begin{proof}
The proof follows the same steps as for Proposition \ref{propi}, with the continuous average replaced by the discrete. 
The crucial step is to show that, for $0<\Re\alpha<n+1$,
\begin{equation}\label{the1st123456}
\lim_{R\to\infty} \limsup_{\substack{\delta\to 0, N\to\infty\\ c\delta^{-1}\leq N}} \int_R^\infty L^{\Re\alpha-1}  \frac{\#\left\{ \vecj \in \ZZ^n\cap N\scrD :  \delta^{n/(n+1)}  q_{\min}(\vecx_0+N^{-1} \vecj ,\delta,\scrA) > L  \right\}}{N^n \vol\scrD } dL = 0,
\end{equation}
and for $-(n+1)<\Re\alpha<0$,
\begin{equation}\label{the2nd3123456}
\lim_{r\to 0} \limsup_{\substack{\delta\to 0, N\to\infty\\ c\delta^{-1}\leq N}} \int_0^r L^{\Re\alpha-1}  \frac{\#\left\{ \vecj \in \ZZ^n\cap N\scrD :  \delta^{n/(n+1)}  q_{\min}(\vecx_0+N^{-1} \vecj ,\delta,\scrA) \leq L  \right\}}{N^n \vol\scrD } dL = 0.
\end{equation}

As to the former, let $\scrA_0$ be an open ball contained in $\scrA$. Since by assumption $c\delta^{-1}\leq N$, there is an $\epsilon\in(0,\frac{\delta}{2}]$ such that 
\begin{equation}
\vecr+\tfrac12\scrA_0 \subset \scrA_0 
\end{equation}
for every $\vecr\in \left[-\frac{\epsilon}{\delta N},\frac{\epsilon}{\delta N}\right]^n$, and therefore 
\begin{equation}
q_{\min}(\vecx_0+N^{-1} \vecj ,\delta,\scrA) \leq q_{\min}(\vecx_0+N^{-1} (\vecj+\vecr) ,\delta,\tfrac12 \scrA_0) .
\end{equation}
This implies
\begin{equation}\label{asin}
\begin{split}
& \frac{1}{N^n} \#\left\{ \vecj \in \ZZ^n\cap N\scrD :  \delta^{n/(n+1)}  q_{\min}(\vecx_0+N^{-1} \vecj ,\delta,\scrA) > L  \right\} \\
& \leq \left(\frac{\delta}{2\epsilon}\right)^n \int_{[-\frac{\epsilon}{\delta N},\frac{\epsilon}{\delta N}]^n} \#\left\{ \vecj \in \ZZ^n\cap N [0,1]^n :  \delta^{n/(n+1)}  q_{\min}(\vecx_0+N^{-1} (\vecj+\vecr) ,\delta,\tfrac12 \scrA_0) > L  \right\} \, d\vecr \\
& \leq 2 \left(\frac{\delta}{2\epsilon}\right)^n \vol\left\{ \vecx\in [0,1]^n :  \delta^{n/(n+1)}  q_{\min}(\vecx ,\delta,\tfrac12 \scrA_0) > L  \right\}  .
\end{split}
\end{equation}
We can now apply our previous estimate \eqref{bibo} to get the required upper bound. 

The case of negative $\alpha$ is analogous. We now take a ball $\scrA_1$ containing $\scrA$. There exists $\epsilon\in(0,\frac{\delta}{2}]$ such that 
\begin{equation}
\scrA_1 \subset \vecr+ 2\scrA_1 
\end{equation}
for every $\vecr\in \left[-\frac{\epsilon}{\delta N},\frac{\epsilon}{\delta N}\right]^n$,
and therefore 
\begin{equation}
q_{\min}(\vecx_0+N^{-1} (\vecj+\vecr) ,\delta,2 \scrA_0) \leq q_{\min}(\vecx_0+N^{-1} \vecj ,\delta,\scrA) .
\end{equation}
Following the same steps as in \eqref{asin}, we can now reduce to the continuous sampling estimate \eqref{CME}.
\end{proof}

Alternatively, we could have used for the proof of negative $\alpha$ the following counterpart of \eqref{lalala}, 
\begin{equation}\label{lalala2}
\frac{1}{N^n} \sum_{\vecj \in \ZZ^n\cap [0,N]^n} \sum_{\substack{(\vecp,q)\in\hatZZ^{n+1}\\ q>0}} \chi_{bL}((\vecp,q) (h(\vecx_0+N^{-1} \vecj ) a(y) )
 \ll L^{n+1} 
\end{equation}
uniformly for $1\leq y=\delta^{-n/(n+1)}\leq c^{-n/(n+1)} N ^{n/(n+1)}$. To prove this note that
\begin{equation}
\sum_{\substack{(\vecp,q)\in\hatZZ^{n+1}\\ q>0}} \chi_{bL}((\vecp-q \vecx) y^{1/n} ,q y^{-1}) )
\leq \sum_{\substack{(\vecp,q)\in\hatZZ^{n+1}\\ q>0}} \chi_{(1+n^{1/2}c^{-1})bL}((\vecp-q (\vecx+\vecr)) y^{1/n} ,q y^{-1}) ) ,
\end{equation}
provided $\|\vecr\|_\infty\leq N^{-1}$. Now take $\vecx=\vecx_0+N^{-1} \vecj$ and integrate $\vecr$ over the cube $[-\frac{N}{2},\frac{N}{2})^n$. This shows that the left hand side of \eqref{lalala2} is bounded above by the left hand side of \eqref{lalala} with $L$ replaced by $(1+n^{1/2}c^{-1})L$, and the claim \eqref{lalala2} is proved.

We remark that, for $\delta=1/N$, \eqref{LD1Mom6789} becomes
\begin{equation}\label{LD1Mom22}
\lim_{N\to\infty}  \frac{1}{N^{n+\alpha n/(n+1)} \vol\scrD} \sum_{\vecj \in \ZZ^n\cap N\scrD}   q_{\min}(\vecx_0+N^{-1} \vecj ,N^{-1},\scrA)^\alpha =
\int_0^\infty L^{\alpha} \; dE_\scrA(L)
\end{equation}
which, for $n=1$, $\alpha=1$, $\scrD=(0,1]$, $\scrA=(0,1]$ and $\vecx_0=0$, yields the Kruyswijk-Meijer conjecture \cite{KM77,Steward13,Balazard23}. 
A numerical comparison of the actual moments and the limit are plotted in Figure \ref{fig4}.

\begin{figure}
\begin{center}
\includegraphics[width=0.7\textwidth]{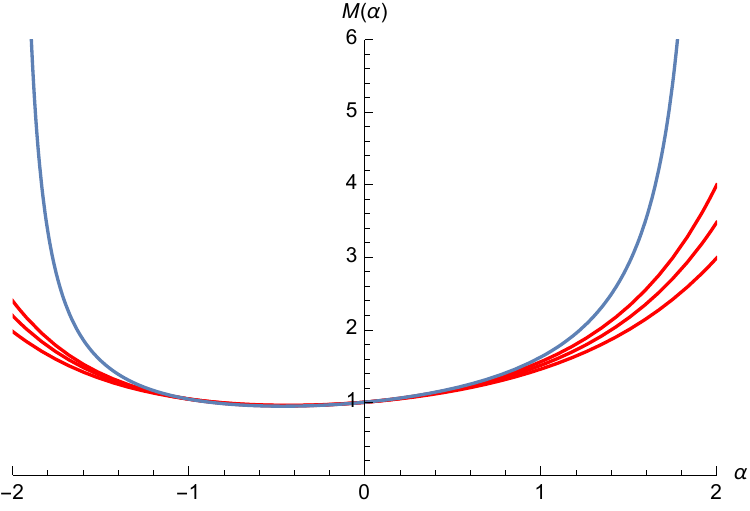}
\end{center}
\caption{The limiting moments $M(\alpha)$ for real $\alpha$ (blue) compared with finite-$N$ approximations (red) corresponding to the left hand side of \eqref{LD1Mom22} with $N=100$, $50$, $25$ (top to bottom) and $n=1$, $\alpha=1$, $\scrD=[0,1)$, $\scrA=[0,1)$, $\vecx_0=0$.} \label{fig4}
\end{figure}

\section{Pigeonhole statistics for Farey fractions}\label{secPigeon}

We start by recalling Theorem 3 in \cite{Farey} regarding the fine-scale statistics of Farey fractions. The case $k=0$ corresponds to the void statistics, which we used in the proof of Proposition \ref{PropLD} for the limit distribution of smallest denominators in the case of continuous sampling. 

\begin{prop}\label{PropFarey1}
For $\scrA\subset\RR^n$ bounded and $\scrD\subset[0,1]^n$, both with boundary of Lebesgue measure zero and non-empty interior, $k\in\ZZ_{\geq 0}$, we have
\begin{equation}\label{LD1F1}
\lim_{\delta\to 0} \frac{\vol\left\{ \vecx\in \scrD :  \#(\scrF_Q \cap \vecx+\sigma_Q^{-1/n} \scrA+\ZZ^n)=k \right\}}{\vol\scrD } = P(k,\scrA)
\end{equation}
where
\begin{equation}
P(k,\scrA) = \mu\{ g\in\GamG : \#(\hatZZ^{n+1}g \cap \fC(\scrA))  = k \} .
\end{equation}
\end{prop}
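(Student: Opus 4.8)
The plan is to recast the counting problem $\#(\scrF_Q \cap \vecx + \sigma_Q^{-1/n}\scrA + \ZZ^n)$ as a lattice point count for the affine image of $\hatZZ^{n+1}$ under an explicit group element, and then invoke horospherical equidistribution on $\GamG$. First I would unfold the definition of $\scrF_Q$: a point $\tfrac{\vecp}{q} \in \scrF_Q$ lies in $\vecx + \sigma_Q^{-1/n}\scrA + \ZZ^n$ exactly when there is $\vecm\in\ZZ^n$ with $\tfrac{\vecp}{q} - \vecm - \vecx \in \sigma_Q^{-1/n}\scrA$, i.e.\ $(\vecp - q\vecm - q\vecx)\,q^{-1} \in \sigma_Q^{-1/n}\scrA$ with $0 < q \le Q$. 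Exactly as in \eqref{cal1000}--\eqref{truu}, this is equivalent to $(\vecp,q)\,h(\vecx)\,a(Q)$ lying in the cone $\fC(\scrA)$, where $h(\vecx)$ and $a(Q)$ are the matrices defined in Section \ref{secArb} and $\fC(\scrA)$ is as defined before the statement; the normalisation constant $\sigma_1$ in $\fC$ absorbs the density asymptotics \eqref{asymQ}. Running this through the reductions already carried out in the excerpt (the chain \eqref{cal10}--\eqref{truu22}), we get that $\#(\scrF_Q \cap \vecx + \sigma_Q^{-1/n}\scrA + \ZZ^n) = \#\big(\hatZZ^{n+1} h(\vecx) a(Q) \cap \fC(\scrA)\big)$ for all but a measure-zero set of $\vecx$ (those for which a Farey point lands on $\partial\scrA$, which have measure zero since $\partial\scrA$ has measure zero and we may translate $\scrA$ slightly).

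The next step is to identify the measure on $\vecx$-space with pushforward of a measure on $\GamG$. The curve (really $n$-dimensional family) $\vecx \mapsto \Gamma h(\vecx) a(Q)$ traces out a piece of an expanding closed horosphere in $\GamG$: the subgroup $\{h(\vecx) : \vecx \in \RR^n\}$ is (conjugate to) the expanding horospherical subgroup for the diagonal flow $a(\cdot)$, and $a(Q)$ pushes the torus $\{\Gamma h(\vecx) : \vecx \in [0,1]^n\}$ out toward the cusp. The classical equidistribution theorem for expanding translates of closed horospheres (due in this generality to the work underlying \cite{partI}, and the content of Theorem 3 in \cite{Farey} which I am allowed to cite directly) says that for any bounded continuous $f$ on $\GamG$,
\begin{equation}
\lim_{Q\to\infty} \frac{1}{\vol\scrD}\int_\scrD f\big(\Gamma h(\vecx) a(Q)\big)\, d\vecx = \int_{\GamG} f\, d\mu .
\end{equation}
To turn the integer-valued observable $\vecx \mapsto \#(\hatZZ^{n+1} h(\vecx)a(Q) \cap \fC(\scrA))$ into something of this form, I would apply this to $f(g) = \ind{\#(\hatZZ^{n+1} g \cap \fC(\scrA)) = k}$. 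This $f$ is not continuous, so one approximates: the set of lattices $g$ with exactly $k$ points in $\fC(\scrA)$ has $\mu$-boundary zero provided $\mu\{g : \hatZZ^{n+1} g \cap \partial\fC(\scrA) \ne \emptyset\} = 0$, which holds because $\partial\fC(\scrA)$ has measure zero (it is built from $\partial\scrA$ and the lid $y=1$) and the orbit map is measure-class preserving; this is the standard Portmanteau/continuity-of-$P(k,\cdot)$ argument already used for $k=0$ in \cite{Farey}, \cite{partI}. Sandwiching $\ind{\cdot = k}$ between continuous functions supported on slightly larger / smaller cones and letting the approximation parameter go to zero gives \eqref{LD1F1} with $P(k,\scrA) = \mu\{g : \#(\hatZZ^{n+1} g \cap \fC(\scrA)) = k\}$.

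One technical point requires care, and I expect it to be the main obstacle: the function $g \mapsto \#(\hatZZ^{n+1} g \cap \fC(\scrA))$ is unbounded on $\GamG$ (a very skew lattice near the cusp can have many points in $\fC(\scrA)$), so the equidistribution statement for bounded continuous $f$ does not apply verbatim, and one must control the contribution of the cusp. This is exactly the escape-of-mass issue. The remedy is the non-divergence / tail estimate already invoked in the excerpt: truncate $\GamG$ to the compact part $\{g : v_1(g) \le T\}$ using the Siegel set description \eqref{Siegel}, apply equidistribution to the truncated (bounded) observable, and then bound the error on $\{v_1(g) > T\}$ uniformly in $Q$ by exactly the estimate \eqref{bibo}--\eqref{bibo22} (Strömbergsson's Lemma 2.1 in \cite{Strombergsson11} plus the sum over Siegel-set boxes from \cite{Kim23}), which shows $\vol\{\vecx \in [0,1]^n : v_1(\Gamma h(\vecx) a(Q)) \ge T\} \ll T^{-(n+1)}$, and correspondingly $\mu\{v_1 \ge T\} \ll T^{-(n+1)}$. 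Letting $T\to\infty$ after $Q\to\infty$ removes the error. Since this is precisely the bound the paper has just established (and whose analogue for $P(k,\cdot)$ is standard from \cite{partI}), the proof reduces to citing Theorem 3 of \cite{Farey} for the general $k$ case, which is what the authors do; I would present the argument as: (i) the algebraic reformulation, (ii) citation of the horospherical equidistribution theorem with escape-of-mass control from \cite{Farey,partI}, (iii) the continuity-set remark making the indicator of $\{\#=k\}$ admissible.
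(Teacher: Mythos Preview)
Your proposal is essentially correct and you yourself identify the paper's approach: this proposition is not proved in the paper at all, it is simply a restatement of Theorem~3 in \cite{Farey}. Your sketch of how that theorem is actually established (algebraic reformulation via $h(\vecx)a(Q)$, horospherical equidistribution from \cite{partI}, Portmanteau argument for the indicator of $\{\#=k\}$) is the correct outline.

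One point is off, though not fatally. You flag escape of mass as ``the main obstacle'' because $g\mapsto \#(\hatZZ^{n+1}g\cap\fC(\scrA))$ is unbounded. But the test function you actually need is the \emph{indicator} $f(g)=\ind{\#(\hatZZ^{n+1}g\cap\fC(\scrA))=k}$, which is bounded by $1$. So no cusp truncation is required here: once horospherical equidistribution is known for bounded continuous $f$ and you have verified (via Siegel's formula, since $\partial\fC(\scrA)$ has Lebesgue measure zero) that $\mu\{g:\hatZZ^{n+1}g\cap\partial\fC(\scrA)\neq\emptyset\}=0$, the Portmanteau theorem finishes the job directly. The escape-of-mass bounds \eqref{bibo}--\eqref{bibo22} are needed in the paper for the \emph{moment} results (Proposition~\ref{propi}), not for Proposition~\ref{PropFarey1}.
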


The following proposition will play the analogous role in the discrete sampling case, and provide the final ingredient of the proof of Proposition \ref{PropLDdiscrete}.

\begin{prop}\label{PropFarey2}
For $\scrA\subset\RR^n$ bounded and $\scrD\subset[0,1]^n$, both with boundary of Lebesgue measure zero and non-empty interior, $k\in\ZZ_{\geq 0}$, $\vecx_0\in\RR^n$, $c>0$, we have
\begin{equation}\label{LD1F2}
\lim_{\substack{Q, N\to\infty\\ c Q^{n+1}\leq N^n}} \frac{\#\left\{ \vecj \in \ZZ^n\cap N\scrD :  \#(\scrF_Q \cap \vecx_0+N^{-1} \vecj +\sigma_Q^{-1/n} \scrA+\ZZ^n)=k \right\}}{N^n \vol\scrD } = P(k,\scrA)
\end{equation}
with $P(k,\scrA)$ as in Proposition \ref{PropFarey1}.
\end{prop}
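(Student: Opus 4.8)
The plan is to follow the scheme of the proof of Proposition \ref{PropFarey1} (that is, of \cite[Theorem~3]{Farey}), with the equidistribution of a closed horosphere replaced by equidistribution of a fine $\ZZ^n$-orbit lying on it, in the spirit of Pattison \cite{Pattison23}. By the same token as \eqref{cal1000} (now for an arbitrary bounded $\scrA$),
\[
\#\big(\scrF_Q \cap \vecx+\sigma_Q^{-1/n}\scrA+\ZZ^n\big)=\#\big(\hatZZ^{n+1}h(\vecx)a(Q)\cap\fC(\scrA)\big),
\]
so the left-hand side of \eqref{LD1F2} equals
\[
\frac{1}{N^n\vol\scrD}\sum_{\vecj\in\ZZ^n\cap N\scrD}\ind{\#(\hatZZ^{n+1}h(\vecx_0+N^{-1}\vecj)a(Q)\cap\fC(\scrA))=k}.
\]
The function $g\mapsto\ind{\#(\hatZZ^{n+1}g\cap\fC(\scrA))=k}$ is bounded and continuous at $\mu$-almost every $g\in\GamG$ (the boundary of this set has $\mu$-measure zero by \cite{partI}; cf.\ \cite{Farey}). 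Hence, sandwiching it between compactly supported continuous functions --- which, $\GamG$ being non-compact, must be accompanied by a uniform tightness bound --- reduces the proposition to showing that for every $f\in C_c(\GamG)$,
\[
\frac{1}{N^n}\sum_{\vecj\in\ZZ^n\cap N\scrD} f\big(\Gamma h(\vecx_0+N^{-1}\vecj)a(Q)\big) \;\longrightarrow\; \vol(\scrD)\,\mu(f)
\]
as $Q,N\to\infty$ with $cQ^{n+1}\leq N^n$.

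The tightness needed here follows from the geometry-of-numbers input already used in the proof of Proposition \ref{propi}: the bound $\vol\{\vecx\in[0,1]^n:v_1(\Gamma h(\vecx)a(Q))\geq M\}\ll M^{-(n+1)}$ from \eqref{bibo22}, combined with a discrete-to-continuous comparison of the kind performed in \eqref{asin} in the proof of Proposition \ref{propidiscrete}. Indeed $v_1(\Gamma h(\vecx)a(Q))$ changes only by a bounded factor under a shift of $\vecx$ by $O(N^{-1})$ --- here the constraint $cQ^{n+1}\leq N^n$ is used to keep the corresponding right translation by $h(t\vecr)$, $\|\vecr\|\ll1$, inside a fixed compact subset of $G$ (with $t:=Q^{1+1/n}/N\leq c^{-1/n}$) --- and is $\ZZ^n$-periodic in $\vecx$; so averaging over a unit cube of shifts bounds $\tfrac1{N^n}\#\{\vecj\in\ZZ^n\cap N\scrD:v_1(\Gamma h(\vecx_0+N^{-1}\vecj)a(Q))>M\}$ by $O(M^{-(n+1)})$ uniformly in $Q$ and $N$. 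In particular the empirical measures in the limit above have total mass tending to $\vol\scrD$ and lose no mass into the cusp.

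For the limit itself, recall $t=Q^{1+1/n}/N\in(0,c^{-1/n}]$; from $a(Q)^{-1}h(\vecv)a(Q)=h(Q^{1+1/n}\vecv)$ one sees that right translation by $h(t\vece_i)$ carries the $\vecj$-th summand to the $(\vecj+\vece_i)$-th. If $t\to0$ along a subsequence, the sum is a Riemann sum for $\int_{\scrD}f(\Gamma h(\vecx_0+\vecy)a(Q))\,d\vecy$, which tends to $\vol\scrD\,\mu(f)$ by equidistribution of the expanding horosphere over the translate $\vecx_0+\scrD$ (as used in the proof of Proposition \ref{PropFarey1}; see \cite{partI,Farey}); so the genuine case is $\liminf t>0$. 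There I would pass to the Fourier side on $\TT^n=\RR^n/\ZZ^n$: since $\vecx\mapsto F_Q(\vecx):=f(\Gamma h(\vecx)a(Q))$ is $\ZZ^n$-periodic, Poisson summation over the subgroup $N^{-1}\ZZ^n/\ZZ^n$ gives (for $\scrD=[0,1)^n$; general $\scrD$ then following by a standard smooth approximation)
\[
\frac{1}{N^n}\sum_{\vecj\in(\ZZ/N\ZZ)^n}F_Q(\vecx_0+N^{-1}\vecj)=\sum_{\vecm\in\ZZ^n}\widehat{F_Q}(N\vecm)\,\e^{2\pi\i\,N\vecm\cdot\vecx_0},\qquad \widehat{F_Q}(\vecxi):=\int_{[0,1)^n}F_Q(\vecx)\,\e^{-2\pi\i\,\vecxi\cdot\vecx}\,d\vecx.
\]
The term $\vecm=\vecnull$ is $\int_{[0,1)^n}f(\Gamma h(\vecx)a(Q))\,d\vecx\to\mu(f)$ by equidistribution of the expanding horosphere, and it remains to show $\sum_{\vecm\neq\vecnull}\widehat{F_Q}(N\vecm)\to0$. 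For $\|\vecm\|$ large I would integrate by parts $A$ times along $\vecm$: each derivative of $F_Q$ produces a factor $Q^{1+1/n}$ together with a fixed higher-order horospherical derivative of $f$, so $|\widehat{F_Q}(N\vecm)|\ll_A(t/\|\vecm\|)^A\|f\|_{C^A}$, which summed over $\|\vecm\|>M$ is $\ll_A c^{-A/n}M^{-(A-n)}\|f\|_{C^A}$. For $0<\|\vecm\|\leq M$ I would instead invoke \emph{effective} equidistribution of the expanding horosphere (available with a polynomial rate $Q^{-\theta}$ from the spectral gap for $\SL(n+1,\ZZ)\backslash\SL(n+1,\RR)$) against the character $\vecx\mapsto\e^{-2\pi\i\,N\vecm\cdot\vecx}$, whose Sobolev norms are polynomial in $N\|\vecm\|\ll Q^{n/(n+1)}$; after first reducing to $f\in C_c^\infty(\GamG)$ via the tightness of the previous step, this bounds $\sum_{0<\|\vecm\|\leq M}|\widehat{F_Q}(N\vecm)|$ and lets one send $Q,N\to\infty$ and then $M\to\infty$.

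The main obstacle is precisely this last estimate in the borderline regime $t\asymp1$ (equivalently $Q^{n+1}\asymp N^n$): there the $\ZZ^n$-orbit samples the expanding horosphere at spacing comparable to its injectivity radius, so the smoothness bound alone is only $O(1)$, while a crude application of effective equidistribution against the fast character $\e^{-2\pi\i\,N\vecm\cdot\vecx}$ loses too much in the Sobolev norm $N\|\vecm\|\asymp Q^{1+1/n}$. The resolution is to interpolate between the two --- exploiting the oscillation of the character to supplement the spectral/Diophantine gain --- which is the mechanism behind Pattison's analysis of the pigeonhole statistics of $\sqrt n\bmod1$ \cite{Pattison23}. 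Everything else (the translation step, the $\mu$-a.e.\ continuity and sandwiching, the tightness, the reduction to smooth compactly supported $f$, the passage from $[0,1)^n$ to general $\scrD$ and general $\vecx_0$, and the easy regime $t\to0$) is routine, given the inputs from \cite{Farey,partI,Kim23} and the earlier parts of this paper.
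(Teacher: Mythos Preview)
Your reduction of the proposition to a discrete horosphere-equidistribution statement is exactly what the paper does (its Proposition~\ref{equiThm2}), and your treatment of tightness and of the easy regime $t:=Q^{1+1/n}/N\to0$ matches the paper's (which uses \eqref{lalala2} directly for tightness via Mahler's criterion, essentially equivalent to going through $v_1$). The real divergence is in the regime where $t$ stays bounded away from $0$.

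There the paper avoids effective equidistribution entirely. Passing to a subsequence with $t\to\tau_0>0$, any weak limit $\nu$ of the empirical measures $\nu_i$ is invariant under $H_{\tau_0}=\{h(\tau_0\vecj):\vecj\in\ZZ^n\}$, and since the $G$-action on $\GamG$ is mixing, $\mu$ is $H_{\tau_0}$-ergodic. The key device (credited to Einsiedler) is to $\epsilon$-broaden $\nu_i$ along the horospherical direction to obtain $\nu_i^\epsilon$; by construction $\nu_i^\epsilon$ occurs as an $\epsilon^n$-proportion summand inside the full continuous horosphere measure $\mu_i$, and $\mu_i\to\mu$ by the known (non-effective) equidistribution. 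In the limit one has $\epsilon^n\nu^\epsilon+(1-\epsilon^n)\overline\nu^\epsilon=\mu$ with both pieces $H_{\tau_0}$-invariant probability measures, and extremality of the ergodic measure $\mu$ forces $\nu^\epsilon=\mu$. Sending $\epsilon\to0$ and using uniform continuity of $f$ gives $\nu=\mu$.

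Your Fourier-analytic scheme is a genuinely different route and would, if completed, give rates; but the decisive step --- the ``interpolation'' meant to handle $0<\|\vecm\|\le M$ when $t\asymp1$ --- is only asserted, not carried out, and it is not evident that the $\SL_2$ mechanism of \cite{Pattison23} transfers to $\SL_{n+1}$ without substantial extra input balancing the spectral gap against the Sobolev loss $(N\|\vecm\|)^K$. The paper's soft argument needs only non-effective horosphere equidistribution plus mixing, which makes it both shorter and more robust.
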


We refer to the above as ``pigeonhole statistics'' for the following reason. Take $\scrA=[0,s)^n$ for some given $s$, let $N$ run through the positive integers, and choose $Q=Q_N$ so that $\sigma_1 Q^{n+1}=s^n N^n$. Then the cubes (=pigeon holes) 
\begin{equation}
\vecx_0+N^{-1} \vecj +\sigma_Q^{-1/n} \scrA = \vecx_0+N^{-1} (\vecj + [0,1)^n)
\end{equation} 
tile $\TT^n:=\RR^n/\ZZ^n$, and the left hand side of \eqref{LD1F2} counts the number of cubes that contain exactly $k$ Farey points with denominator at most $Q_N$.

Following the strategy of proof of Proposition \ref{PropFarey1} (Theorem 3 in \cite{Farey}), we need to replace the equidistribution theorem for closed horospheres (Theorem 1 in \cite{Farey}) by the following discrete version. There has been significant interest recently in studying the distribution of rational points on horospheres. We refer the interested reader to \cite{Burrin22,Einsiedler21,Einsiedler16,ElBaz22,Pattison23} and references therein.  

\begin{prop}\label{equiThm2}
For $f:\TT^n\times\GamG\to\RR$ bounded continuous, $c>0$, we have
\begin{equation}\label{equiThm1eq}
	\lim_{\substack{N,Q\to\infty\\ cQ^{n+1}\leq N^n}} \frac{1}{N^n} \sum_{\vecj\in\ZZ^n/N\ZZ^n} f\big(N^{-1}\vecj ,h(\vecx_0+N^{-1}\vecj )a(Q)\big)  \\
	= \int_{\TT^n\times\GamG} f(\vecx,g) \, d\vecx \, d\mu(g) .
\end{equation}
\end{prop}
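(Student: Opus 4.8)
The statement to prove, Proposition \ref{equiThm2}, is a discrete analogue of the equidistribution of closed horospheres (Theorem 1 in \cite{Farey}): we are averaging a test function over the rational points $N^{-1}\vecj$ on the horosphere $\{h(\vecx)a(Q) : \vecx\in\TT^n\}$, rather than over the full horosphere with Lebesgue measure. The natural strategy, following Pattison \cite{Pattison23} and the references on rational points on horospheres cited just above, is to unfold the sum: each rational point $N^{-1}\vecj$ gives a lattice point $(\vecj, N)/\gcd(\cdots)$ (after projectivising), and the orbit of $\Gamma$ through these produces, via Fourier analysis on $\TT^n$ or via a spectral/mixing argument, the product measure on $\TT^n\times\GamG$. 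The key structural fact is that the points $h(\vecx_0+N^{-1}\vecj)a(Q)$, as $\vecj$ runs over $\ZZ^n/N\ZZ^n$, can be described as a single (or a bounded union of) closed orbit(s) of an expanding horospherical subgroup under the time-$Q$ diagonal flow, combined with a shift, so that one is really proving equidistribution of a long closed horocycle-type orbit sampled along an arithmetic progression.

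\textbf{Key steps, in order.} First I would reduce to test functions of product type $f(\vecx,g)=\phi(\vecx)\psi(g)$, or better to Fourier modes $f(\vecx,g)=e(\vecm\cdot\vecx)\psi(g)$ with $\vecm\in\ZZ^n$, by Stone--Weierstrass and a density argument; the $\vecm=\vecnull$ mode is the main term and must be shown to converge to $\int_{\GamG}\psi\,d\mu$, while the $\vecm\neq\vecnull$ modes must be shown to vanish in the limit. Second, for the main term $\vecm=\vecnull$, I would recognise the average $N^{-n}\sum_{\vecj}\psi(h(\vecx_0+N^{-1}\vecj)a(Q))$ as a Riemann-sum approximation to $\int_{\TT^n}\psi(h(\vecx)a(Q))\,d\vecx$ up to an error controlled by the modulus of continuity of $\psi$ composed with $h(\cdot)a(Q)$ — here the condition $cQ^{n+1}\le N^n$ is exactly what is needed, since the Lipschitz constant of $\vecx\mapsto h(\vecx)a(Q)$ in a right-invariant metric grows like $Q^{(n+1)/n}$ (the $a(Q)$-conjugation stretches the unipotent direction), so the Riemann-sum error is $O(Q^{(n+1)/n}/N)=O(1)\cdot(Q^{n+1}/N^n)^{1/n}\to\dots$ — wait, this needs the *stronger* condition and is not quite immediate, so instead I would smooth: replace the point masses at $N^{-1}\vecj$ by narrow bump functions of width $\sim N^{-1}$, use that the bumps tile $\TT^n$, and then invoke Theorem 1 of \cite{Farey} (equidistribution of the closed horosphere) for the resulting continuous average, controlling the smoothing error using an escape-of-mass / non-divergence estimate (Siegel sets, as set up in the excerpt around \eqref{Siegel}) to ensure $\psi\circ(h(\cdot)a(Q))$ does not oscillate too wildly on the cusp. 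Third, for the oscillatory modes $\vecm\neq\vecnull$, I would use the Fourier-analytic cancellation: the sum $N^{-n}\sum_{\vecj}e(\vecm\cdot N^{-1}\vecj)\psi(h(\vecx_0+N^{-1}\vecj)a(Q))$ is, after the same smoothing, an integral of $e(\vecm\cdot\vecx)$ against $\psi(h(\vecx)a(Q))$, and the equidistribution of the horosphere (with the *full* test function $e(\vecm\cdot\vecx)\psi(g)$ on $\TT^n\times\GamG$, which again integrates to zero) kills it in the limit; alternatively one quotes the quantitative version of horospherical equidistribution with a spectral gap to get a power-saving bound $O(Q^{-\theta})$ times a Sobolev norm, which dominates the $\vecm\neq\vecnull$ contributions uniformly.

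\textbf{Main obstacle.} The crux is handling the regime where $Q$ is as large as $N^{n/(n+1)}$ (the borderline $cQ^{n+1}=N^n$): here the horosphere is already so long that the $N^n$ sample points are only *barely* dense enough to see the equidistribution, and the naive Riemann-sum / smoothing error is of the same order as the main term. Overcoming this requires either (i) an honest quantitative equidistribution statement for closed horospheres with an explicit error term — which the author notes at the end of the introduction is available — applied to a smoothing at scale $N^{-1}$, and then a careful bookkeeping showing the error term (a negative power of $Q$ times a Sobolev norm of the bump, the latter growing like a positive power of $N$) still goes to zero precisely because $Q$ is a fixed positive power of $N$; or (ii) the non-divergence estimate near the cusp to show the contribution of $\vecx$ with $h(\vecx)a(Q)$ high in the cusp is negligible uniformly in the sampling. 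I expect step (ii), the escape-of-mass control adapted to the discrete sum rather than the Lebesgue integral, to be where the real work lies — this is exactly the place where one must import (as the excerpt does elsewhere, cf.~the use of \cite{Kim23} and \cite{Strombergsson11} in the proof of Proposition \ref{propi}) the Siegel-set estimates $\vol\{\vecx : v_1(\Gamma h(\vecx)a(Q))\ge T\}\ll T^{-(n+1)}$ and upgrade them to the discrete counting measure $N^{-n}\#\{\vecj : v_1(\Gamma h(\vecx_0+N^{-1}\vecj)a(Q))\ge T\}\ll T^{-(n+1)}$ uniformly in the allowed range of $(Q,N)$, which is plausibly done by the same bump-function comparison used in \eqref{asin}.
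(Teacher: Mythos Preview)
Your approach---Fourier decomposition on $\TT^n$ and comparison with the continuous horospherical average via smoothing---is genuinely different from the paper's, and your identification of the borderline regime $Q^{n+1}\asymp N^n$ as the crux is correct. However, your proposed remedies do not clearly close the gap there. In that regime, consecutive sample points $h(\vecx_0+N^{-1}\vecj)a(Q)$ are a \emph{bounded} distance apart in $\GamG$ (since $a(Q)^{-1}h(N^{-1}\vecm)a(Q)=h(Q^{(n+1)/n}N^{-1}\vecm)$ and $Q^{(n+1)/n}/N\asymp 1$), so no Riemann-sum or smoothing-at-scale-$N^{-1}$ argument can reduce to the continuous horosphere: the smoothing error is $O(1)$, not $o(1)$. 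Your option~(ii), escape-of-mass, secures tightness (which the paper also needs, via \eqref{lalala2}) but says nothing about identifying the limit. Your option~(i), effective equidistribution with a spectral gap, could in principle work, but you would then need an effective bound for \emph{rational points} on expanding horospheres---which is itself the substantial content of references such as \cite{Einsiedler16,ElBaz22}---and the bookkeeping you sketch (Sobolev norm of a width-$N^{-1}$ bump against $Q^{-\theta}$) does not obviously converge in the critical regime without further input.

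The paper sidesteps all of this with a soft ergodic argument. The key identity you are missing is
\[
h(\vecx_0+N^{-1}\vecj)\,a(Q)\;=\;h(\vecx_0)\,a(Q)\,h(\tau\vecj),\qquad \tau:=Q^{(n+1)/n}N^{-1}\in(0,c^{-1/n}],
\]
so the sample points form the orbit of a single point under the lattice $H_\tau=\{h(\tau\vecj):\vecj\in\ZZ^n\}$. One passes to subsequences with $\tau\to\tau_0$. If $\tau_0=0$, uniform continuity of $f$ reduces to the continuous case (Theorem~1 of \cite{Farey}). If $\tau_0>0$, any weak limit $\nu$ of the (tight) sample measures is $H_{\tau_0}$-invariant; since the $G$-action on $(\GamG,\mu)$ is mixing, the $H_{\tau_0}$-action is $\mu$-ergodic. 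The paper then uses an $\epsilon$-broadening trick (credited to Einsiedler): convolving the sample measure by a small horospherical bump and subtracting from the continuous horospherical measure writes $\mu$ as a nontrivial convex combination of two $H_{\tau_0}$-invariant probability measures, whence by extremality of ergodic measures both equal $\mu$; letting $\epsilon\to 0$ yields $\nu=\mu$. No effective rates are required.
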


\begin{proof}
By a standard measure-theoretic argument, it will be sufficient to show that for $\scrD\subset\TT^n$ with boundary of Lebesgue measure zero and non-empty interior, $f:\GamG\to\RR$ bounded continuous, we have
\begin{equation}\label{equiThm1eq2}
	\lim_{\substack{N,Q\to\infty\\ cQ^{n+1}\leq N^n}} \frac{1}{N^n\vol\scrD} \sum_{\vecj\in\ZZ^n/N\ZZ^n\cap N\scrD} f\big(h(\vecx_0+N^{-1}\vecj )a(Q)\big)  \\
	= \int_{\GamG} f(g) \, d\mu(g) .
\end{equation}
For a given sequence of $(N_i,Q_i)$, the left hand side of \eqref{equiThm1eq2} defines a sequence of probability measures $\nu_i$ on $\GamG$ via 
\begin{equation}
\nu_i(f) = \frac{1}{\#(\ZZ^n/N_i\ZZ^n\cap N_i\scrD)} \sum_{\vecj\in\ZZ^n/N_i\ZZ^n\cap N_i\scrD} f\big(h(\vecx_0+N_i^{-1}\vecj )a(Q_i)\big)
\end{equation}
which we need to show converges weakly to the probability measure $\mu$. By Mahler's compactness criterion for the space of lattices, the complement of large-volume compact sets are characterised by lattices with short vectors. The estimate \eqref{lalala2} therefore shows that $(\nu_i)_i$ is tight and thus each subsequence contains a convergent subsequence. We may now assume (without loss of generality) that $f$ has compact support and is therefore uniformly continuous. A key observation is that
\begin{equation}
h(\vecx_0+N^{-1}\vecj )a(Q) = h(\vecx_0)a(Q) h(Q^{1+1/n} N^{-1}\vecj ) .
\end{equation}
In the following we restrict to subsequences along which $Q_i^{1+1/n} N_i^{-1} \to \tau_0$ for some $\tau_0\in [0,c^{-1/n}]$. 

In the case $\tau_0=0$ the discrete average is uniformly close to the continuous average (by uniform continuity of $f$), and thus by Theorem 1 in \cite{Farey} the limit is given by $\mu$. 

    If $\tau_0>0$, then any weak  limit is invariant under the map $\GamG\to\GamG$, $\Gamma g \mapsto \Gamma g h(\tau_0\vecj)$ for any $\vecj\in\ZZ^n$. Since the action of $G$ on $\GamG$ by right multiplication is mixing with respect to $\mu$, we have in particular that the action of the subgroup $H_{\tau_0}=\{ h(\tau_0\vecj) : \vecj\in\ZZ^n\}$ is $\mu$-ergodic. There are various avenues to determine the possible limit points of $(\nu_i)_i$, for example referring to disjointness results for mixing actions. Here we take a more direct path shown to me by M.~Einsiedler. Let us $\epsilon$-broaden the probability measure $\nu_i$ by setting, for $0<\epsilon<\tau_0$,
\begin{equation}
\nu_i^\epsilon(f) = \nu_i(f_\epsilon), \qquad f_\epsilon(g):=\frac{1}{\epsilon^n} \int_{[-\frac{\epsilon}{2},\frac{\epsilon}{2}]^n} f\big(g h(\vecx)\big)\, d\vecx .
\end{equation}
We also define the probability measure corresponding to the continuous horospherical average
\begin{equation}
\mu_i(f) = \frac{1}{\vol \scrD} \int_{\scrD} f\big(h(\vecx)a(Q_i)\big)\, d\vecx ,
\end{equation}
and the complementary probability measure 
\begin{equation}
\overline\nu_i^\epsilon = \frac{\mu_i - \epsilon^n \nu_i^\epsilon}{1  - \epsilon^n} .
\end{equation}
Suppose $\nu_i^\epsilon\to \nu^\epsilon$ along a converging subsequence. As we have $\mu_i \to \mu$ (along any subsequence), by construction $\overline\nu_i^\epsilon\to \overline\nu^\epsilon$ along the same subsequence as $\nu_i^\epsilon$, and the limits satisfy the relation
\begin{equation}
\epsilon \nu^\epsilon + (1-\epsilon) \overline\nu^\epsilon = \mu.
\end{equation}
All three limit measures are $H_{\tau_0}$-invariant. Since the action of $H_{\tau_0}$ is $\mu$-ergodic, by the extremality property of ergodic measures, we conclude $\nu^\epsilon = \overline\nu^\epsilon = \mu$ for every given $\epsilon>0$. Because $f$ is uniformly continuous, we have
\begin{equation}
\lim_{\epsilon\to 0} \sup_i |\nu_i(f) - \nu_i^\epsilon (f)| =0 
\end{equation}
and thus every limit point of $(\nu_i)_i$ must be equal to $\mu$.
\end{proof}

\section{Moments of the distance function for the Farey sequence}\label{sec97}

It is instructive to compare the moments of the smallest denominator of \cite{Chen23} with the distance of a random point to the Farey sequence in \cite{Kargaev97}. Let us already consider the higher dimensional distance function, for $\vecx\in\TT^n$, $Q\geq 1$,
\begin{equation}
\dist(\vecx,\scrF_Q) = \min\{ \|\vecx+\vecr+\vecm\| : \vecr\in\scrF_Q,\;\vecm\in\ZZ^n \} ,
\end{equation}
where $\|\,\cdot\,\|$ can be any of the standard norms on $\RR^n$. We first of all note that, for every $s\geq 0$,
\begin{equation} \label{cal10F}
\sigma_Q^{1/n} \dist(\vecx,\scrF_Q) > s  \Leftrightarrow \scrF_Q \cap \vecx+\sigma_Q^{-1/n} \scrB_s +\ZZ^n =\emptyset ,
\end{equation}
where $\scrB_s=\{\vecy\in\RR^n : \| \vecy \| \leq s \}$, which gives the connection with the void statistics with $\scrA=\scrB_s$, and via \eqref{cal10} to the smallest denominator. Note that for finite $Q$ resp.\ $\delta$ the moments of the two distributions are different, although the limiting distributions are the same, up to a simple scaling. The following statement generalises the results of \cite[Theorem 1.4]{Kargaev97} in dimension $n=1$ (in the second of the four ranges, which corresponds to $-1<\beta<1$ in the statement below) to arbitrary dimension. 

 \begin{prop}\label{propiF}
For $\scrD\subset[0,1]^n$ with boundary of Lebesgue measure zero and non-empty interior, $\beta\in\CC$ with $|\Re\beta| <n$, we have
\begin{equation}\label{LD1Mom2345F}
\lim_{Q\to\infty } \frac{\sigma_Q^{\beta/n}}{\vol\scrD} \int_{\scrD}   \dist(\vecx,\scrF_Q)^\beta d\vecx=
\int_0^\infty s^{\beta} \; dF_{\scrB_1}(s),
\end{equation}
with $F_{\scrB_1}(s)=P(0,\scrB_s)=E_{\scrB_1}(\sigma_1^{-1/(n+1)} s^{n/(n+1)})$.
\end{prop}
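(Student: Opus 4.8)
The plan is to mimic the proof of Proposition \ref{propi}, the moment statement for smallest denominators, since the equivalence \eqref{cal10F} ties $\dist(\vecx,\scrF_Q)$ directly to the void statistics $P(0,\scrB_s)$ with ball-shaped test set $\scrA = \scrB_s$. First I would record the distributional consequence of Proposition \ref{PropFarey1} with $k=0$: writing $F_{\scrB_1}(s) = P(0,\scrB_s)$, one has
\begin{equation}
\lim_{Q\to\infty} \frac{\vol\{\vecx\in\scrD : \sigma_Q^{1/n}\dist(\vecx,\scrF_Q) > s\}}{\vol\scrD} = F_{\scrB_1}(s),
\end{equation}
which, since $F_{\scrB_1}$ is continuous (the case $\scrA$ a ball; see \cite[Remark 2.6]{partI}), is equivalent to convergence of $\frac{1}{\vol\scrD}\int_\scrD h(\sigma_Q^{1/n}\dist(\vecx,\scrF_Q))\,d\vecx$ to $\int_0^\infty h\,dF_{\scrB_1}$ for every bounded continuous $h$. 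This settles the case $\Re\beta = 0$ immediately by taking $h(x) = x^\beta$.

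For $\Re\beta \neq 0$ I would use the layer-cake representation, writing (for $\Re\beta > 0$)
\begin{equation}
\sigma_Q^{\beta/n}\int_\scrD \dist(\vecx,\scrF_Q)^\beta\,d\vecx = \beta\int_0^\infty s^{\beta-1}\vol\{\vecx\in\scrD : \sigma_Q^{1/n}\dist(\vecx,\scrF_Q) > s\}\,ds,
\end{equation}
and analogously for $\Re\beta < 0$ with the complementary event. Convergence on any compact $s$-range $[r,R]$ is handled by the displayed limit above plus dominated convergence (the integrand is bounded by $s^{\Re\beta-1}\vol\scrD$). What remains is uniform (in $Q$) control of the tails: for $\Re\beta > 0$ one needs $\limsup_{R\to\infty}\limsup_{Q\to\infty}\int_R^\infty s^{\Re\beta-1}\vol\{\dots > s\}\,ds = 0$, and for $\Re\beta < 0$ the corresponding statement near $s = 0$ for the event $\{\sigma_Q^{1/n}\dist(\vecx,\scrF_Q)\leq s\}$. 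The small-$s$ estimate is the easy one: $\{\sigma_Q^{1/n}\dist(\vecx,\scrF_Q)\leq s\}$ is a union over $\vecr\in\scrF_Q$ of balls of radius $\sigma_Q^{-1/n}s$, so its volume is $\ll \#\scrF_Q\cdot(\sigma_Q^{-1/n}s)^n \ll s^n$, uniformly in $Q$, which kills the integral near $0$ for $-n < \Re\beta < 0$. The large-$s$ estimate is the one requiring the lattice machinery: via \eqref{cal1000} the event $\{\sigma_Q^{1/n}\dist(\vecx,\scrF_Q) > s\}$ translates to $\hatZZ^{n+1}h(\vecx)a(Q)\cap\fC(\scrB_s) = \emptyset$, i.e. the lattice $\ZZ^{n+1}h(\vecx)a(Q)$ avoids a ball of radius $\asymp s$ not containing the origin; by \cite[Lemma 2.1]{Strombergsson11} this forces $v_1(\Gamma h(\vecx)a(Q))\gg s$, and then the Siegel-set counting argument from the proof of Proposition \ref{propi} — the chain \eqref{bibo2}--\eqref{bibo3}, via case B1 of \cite[Proposition 5.1]{Kim23} — gives $\vol\{\vecx\in[0,1]^n : \sigma_Q^{1/n}\dist(\vecx,\scrF_Q) > s\} \ll s^{-(n+1)}$ uniformly in $Q$. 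Since $\Re\beta < n < n+1$, the tail integral $\int_R^\infty s^{\Re\beta-1-(n+1)}\,ds \to 0$ as $R\to\infty$, giving \eqref{the1st}-type vanishing.

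Assembling these pieces exactly as in Proposition \ref{propi} yields convergence to $\int_0^\infty s^\beta\,dF_{\scrB_1}(s)$; the identity $F_{\scrB_1}(s) = E_{\scrB_1}(\sigma_1^{-1/(n+1)}s^{n/(n+1)})$ is just the change of variables relating the two normalisations, read off from $E_\scrA(L) = P(0,\sigma_1^{1/n}L^{1+1/n}\scrA)$ in Proposition \ref{PropLD} by setting $\sigma_1^{1/n}L^{1+1/n} = s$, i.e. $L = \sigma_1^{-1/(n+1)}s^{n/(n+1)}$. I expect the main obstacle to be nothing genuinely new — it is the uniform tail bound $\vol\{\dots > s\}\ll s^{-(n+1)}$, which is precisely where one must invoke the Siegel-set/reduction-theory estimates; but since that estimate was already established for general bounded $\scrA$ with non-empty interior in the proof of Proposition \ref{propi} (and $\scrB_1$ is such an $\scrA$), the work here is essentially bookkeeping: one only needs to note that the radius-$s$ ball of \eqref{cal10F} already \emph{is} the ball supplied to \cite[Lemma 2.1]{Strombergsson11}, so no separate "inscribed ball" step is needed. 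The only point requiring a line of care is that the constant $c$ and the joint limit do not appear here — Proposition \ref{propiF} is a single-parameter ($Q\to\infty$) statement — so one uses the continuous-sampling estimates \eqref{bibo} and \eqref{CME} directly rather than their discrete counterparts.
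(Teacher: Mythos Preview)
Your overall strategy matches the paper's, and the cases $\Re\beta=0$ and $\Re\beta<0$ are fine (your ball-counting for the small-$s$ bound is in fact a bit cleaner than the paper's route via \eqref{lalala}). The large-$s$ tail estimate, however, contains a genuine error.

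You assert that $\fC(\scrB_s)$ contains a ball of radius $\asymp s$, so that \cite[Lemma 2.1]{Strombergsson11} forces $v_1(\Gamma h(\vecx)a(Q))\gg s$ and hence $\vol\{\cdots>s\}\ll s^{-(n+1)}$. This is false: by definition $\fC(\scrB_s)=\{(\vecx,y):0<y\le 1,\ \|\vecx\|\le \sigma_1^{-1/n}ys\}$ has height $1$ in the last coordinate regardless of $s$, so any inscribed ball has radius $\le\tfrac12$. The set $\fC(s\scrA)$ is \emph{not} the homothetic dilate $s\,\fC(\scrA)$; in Proposition \ref{propi} the relevant set was $L\,\fC(\sigma_1^{1/n}\scrA)$, and that is precisely the difference you are glossing over. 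Indeed your claimed bound $\ll s^{-(n+1)}$ cannot hold uniformly in $Q$, since the limit $F_{\scrB_1}(s)=P(0,\scrB_s)\asymp s^{-n}$ as $s\to\infty$ (cf.\ the remark after Proposition \ref{PropLD}); this is also why the hypothesis reads $|\Re\beta|<n$ rather than $n+1$.

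The paper's fix is a one-line rescaling: right-multiply by $a(s^{-n/(n+1)})$, which carries $\fC(\scrB_s)$ to the genuine homothetic dilate $s^{n/(n+1)}\fC(\scrB_1)$. Then \eqref{bibo22} applies with $L=s^{n/(n+1)}$ and flow parameter $y=Qs^{-n/(n+1)}$, yielding $\vol\{\cdots>s\}\ll (s^{n/(n+1)})^{-(n+1)}=s^{-n}$, which suffices since $\Re\beta<n$. One also needs $y>1$ for the escape-of-mass estimate; the paper secures this by first observing $\dist(\vecx,\scrF_Q)\le CQ^{-1}$ (from $Q^{-1}\ZZ^n\cap[0,1)^n\subset\scrF_Q$), which bounds the relevant $s$-range by $s\ll Q^{1/n}$.
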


\begin{proof}
The strategy is the same as for the proof of Proposition \ref{propi}. The key point is to show that when $\Re\beta>0$,
\begin{equation}\label{the1stF}
\lim_{R\to\infty} \limsup_{Q\to\infty} \int_R^\infty s^{\Re\beta-1} \vol\left\{ \vecx\in \scrD :  \sigma_Q^{1/n}  \dist(\vecx,\scrF_Q) > s \right\} ds = 0.
\end{equation}
Since $Q^{-1}\ZZ^n \cap[0,1)^n \subset \scrF_Q$ we have $\dist(\vecx,\scrF_Q) \leq C Q^{-1}$ for some constant $C$ (depending only on the choice of norm $\|\,\cdot\,\|$) and hence can restrict the integral to $s\leq C \sigma_Q^{1/n} Q^{-1} = C \sigma_1^{1/n} Q^{1/n}$. 
We need to estimate the volume of $\vecx$ such that
\begin{equation}
\hatZZ^{n+1}h(\vecx) a(Q) \cap \fC(\scrB_s) =\emptyset ,
\end{equation}
which is equivalent to
\begin{equation}
\hatZZ^{n+1}h(\vecx) a(Q) a(s^{-n/(n+1)}) \cap s^{n/(n+1)} \fC(\scrB_1) =\emptyset .
\end{equation}
We now apply \eqref{bibo22} with $y=Q s^{-n/(n+1)}$ in place of $Q$, and $L=s^{n/(n+1)}$. (Note here that, since $s\leq C \sigma_1^{1/n} Q^{1/n}$ we have $y=Q s^{-n/(n+1)}\geq C^{-n/(n+1)} \sigma_1^{-1/(n+1)} Q^{1-1/(n+1)}>1$ for all sufficiently large $Q$.) This yields
\begin{equation}\label{lord}
\vol\left\{ \vecx\in \scrD :  \sigma_Q^{1/n}  \dist(\vecx,\scrF_Q) > s \right\} \ll s^{-n} .
\end{equation}
Now \eqref{lord} implies \eqref{the1stF}.

In the case $\Re\beta<0$ we need to check that
\begin{equation}\label{the2nd3F}
\lim_{r\to 0} \limsup_{Q\to\infty} \int_0^r s^{\Re\beta-1} \vol\left\{ \vecx\in \scrD :  \sigma_Q^{1/n}  \dist(\vecx,\scrF_Q) \leq s  \right\} ds = 0.
\end{equation}
which leads to the condition
\begin{equation} \label{truu2F}
\hatZZ^{n+1}h(\vecx) a(Q) a(s^{-n/(n+1)}) \cap s^{n/(n+1)} \fC(\scrB_1) \neq \emptyset ,
\end{equation}
hence the above lattice has an element of norm at most $s^{n/(n+1)}$. From \eqref{lalala} applied to $L=s^{n/(n+1)}$ and $y=Q s^{-n/(n+1)}>1$ we conclude
\begin{equation}
\vol\left\{ \vecx\in \scrD :  \sigma_Q^{1/n}  \dist(\vecx,\scrF_Q) \leq s \right\}  \ll s^n 
\end{equation}
for all $0<s\leq 1$. This proves \eqref{the2nd3F}.
\end{proof}

For completeness, we also state the counterpart of Proposition \ref{propiF} in the case of discrete sampling, which may be viewed as the moments of the pigeonhole void distribution.

\begin{prop}\label{propidiscreteF}
For $\scrD\subset[0,1]^n$ with boundary of Lebesgue measure zero and non-empty interior, $\beta\in\CC$ with $|\Re\beta| <n$, $\vecx_0\in\RR^n$, $c>0$, we have
\begin{equation}\label{LD1Mom6789F}
\lim_{\substack{N,Q\to\infty\\ cQ^{n+1}\leq N^n}} \frac{\sigma_Q^{\beta/n}}{N^n} \sum_{\vecj\in\ZZ^n/N\ZZ^n}   \dist(\vecx_0+N^{-1} \vecj ,\scrF_Q)^\beta =
\int_0^\infty s^{\beta} \; dF_{\scrB_1}(s) .
\end{equation}
\end{prop}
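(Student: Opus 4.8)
The plan is to transcribe the proof of Proposition~\ref{propiF} almost word for word, replacing the continuous average $\frac{1}{\vol\scrD}\int_\scrD$ by the discrete average $\frac{1}{N^n}\sum_{\vecj\in\ZZ^n/N\ZZ^n}$ throughout, exactly as Proposition~\ref{propidiscrete} does for Proposition~\ref{propi}. The three inputs of Proposition~\ref{propiF} --- the convergence of the void statistics, the upper-tail bound \eqref{lord}, and the lower-tail bound coming from \eqref{lalala} --- must each be replaced by a discrete counterpart: the first by Proposition~\ref{PropFarey2}, the other two by estimates already prepared in Section~\ref{secDiscrete}.

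For the main term I would use that, by \eqref{cal10F}, the event $\{\sigma_Q^{1/n}\dist(\vecx_0+N^{-1}\vecj,\scrF_Q)>s\}$ is precisely the $k=0$ pigeonhole event for the ball $\scrA=\scrB_s$. Hence Proposition~\ref{PropFarey2} gives, for each fixed $s>0$,
\begin{equation*}
\lim_{\substack{N,Q\to\infty\\cQ^{n+1}\le N^n}}\frac{1}{N^n}\#\big\{\vecj\in\ZZ^n/N\ZZ^n:\sigma_Q^{1/n}\dist(\vecx_0+N^{-1}\vecj,\scrF_Q)>s\big\}=P(0,\scrB_s)=F_{\scrB_1}(s).
\end{equation*}
Since $\scrB_1$ is a ball, $F_{\scrB_1}$ is continuous (\cite[Remark~2.6]{partI}), so a routine weak-convergence argument upgrades this to $\frac{1}{N^n}\sum_{\vecj}g\big(\sigma_Q^{1/n}\dist(\vecx_0+N^{-1}\vecj,\scrF_Q)\big)\to\int_0^\infty g\,dF_{\scrB_1}$ for every bounded continuous $g$. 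Taking $g(s)=s^\beta$ settles the case $\Re\beta=0$, and, representing the $\beta$-th moment through $\int_0^\infty s^{\Re\beta-1}(\text{counting function})\,ds$ as in Proposition~\ref{propi}, it also yields the contribution of any fixed range $s\in[r,R]$.

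There remain the two tails. For $\Re\beta>0$ the task is $\lim_{R\to\infty}\limsup\int_R^\infty s^{\Re\beta-1}\frac{1}{N^n}\#\{\vecj:\sigma_Q^{1/n}\dist(\cdots)>s\}\,ds=0$; since $Q^{-1}\ZZ^n\cap[0,1)^n\subset\scrF_Q$ the integrand vanishes for $s>C\sigma_1^{1/n}Q^{1/n}$, and on the remaining range I would dominate the discrete count by the continuous volume of a slightly dilated set exactly as in \eqref{asin} --- legitimate because, for $R$ large and $cQ^{n+1}\le N^n$, the mesh $N^{-1}$ is small relative to the window $\sigma_Q^{-1/n}s$ --- and then quote the continuous bound \eqref{lord}. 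For $\Re\beta<0$, with the convention that terms with $\dist=0$ contribute nothing, the task reduces to the uniform bound
\begin{equation}\label{plangoal}
\frac{1}{N^n}\#\big\{\vecj\in\ZZ^n/N\ZZ^n:0<\sigma_Q^{1/n}\dist(\vecx_0+N^{-1}\vecj,\scrF_Q)<s\big\}\ll s^n\qquad(0<s\le 1).
\end{equation}
Rescaling the lattice by $a(s^{-n/(n+1)})$ turns the thin cone $\fC(\scrB_s)$ into $\fC(\scrB_1)\cap\{0<w\le s^{n/(n+1)}\}$, which sits inside a ball of radius $\ll s^{n/(n+1)}$; applying \eqref{lalala2} with $L=s^{n/(n+1)}$ and $y=Q\,s^{-n/(n+1)}$ then gives \eqref{plangoal} (in fact even without the $\dist=0$ exclusion) for $s\gtrsim c\,Q^{(n+1)/n}/N$, the range in which the hypothesis $y\le c^{-n/(n+1)}N^{n/(n+1)}$ of \eqref{lalala2} holds.

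The hard part will be \eqref{plangoal} in the residual range $s\lesssim c\,Q^{(n+1)/n}/N$, where $\rho:=\sigma_Q^{-1/n}s\lesssim N^{-1}$, so the mesh is coarser than the balls of radius $\rho$ around the Farey fractions and the naive discretisation only gives $\ll 1$, losing the $s$-dependence entirely; it is precisely here that one must discard the exact hits $\dist=0$, since otherwise the left-hand side of \eqref{plangoal} stays bounded below by $N^{-n}\#\{\vecj:\vecx_0+N^{-1}\vecj\in\scrF_Q+\ZZ^n\}$ as $s\to0$. The substantive estimate is then a direct count of the $\vecj$ with $\vecx_0+N^{-1}\vecj$ within $\rho$ of $\scrF_Q+\ZZ^n$ but not on it: unfolding the $\vecp$-sum as in \eqref{lalala}, for each $q\le Q$ the admissible $\vecj$ are governed by the distance of $N\vecx_0$ to the sublattice $\tfrac1q\gcd(q,N)\ZZ^n\pmod 1$, and on summing the resulting contributions over $q$ and using $cQ^{n+1}\le N^n$ one expects to recover the main term $(N\rho)^nQ^{n+1}\asymp N^ns^n$. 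I expect this arithmetic count --- in particular controlling the contribution of divisors of $N$ uniformly in $\vecx_0$ --- to be the only genuine obstacle; everything else is a mechanical transcription of the proofs of Propositions~\ref{propiF} and \ref{propidiscrete}.
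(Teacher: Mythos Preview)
Your plan is exactly the paper's: its entire proof of this proposition is the single sentence ``The proof of this statement follows along the same lines as Proposition~\ref{propidiscrete},'' and you have simply unpacked what that entails. On the level of strategy there is nothing to compare.

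Where you go beyond the paper is in flagging the $\Re\beta<0$ tail, and your diagnosis there is correct rather than over-cautious. The smearing device behind \eqref{asin} and \eqref{lalala2} requires the window in $\vecx$-space to be at least comparable to the mesh $N^{-1}$, i.e.\ $\sigma_Q^{-1/n}s\gtrsim N^{-1}$; in the extremal regime $cQ^{n+1}\asymp N^n$ this forces $s$ to be bounded below by a positive constant and yields nothing on $(0,r]$. The reason this difficulty is absent in Proposition~\ref{propidiscrete} is exactly the one you give: there the window $\delta\scrA$ stays fixed while only the threshold $L$ varies, whereas here the window $\sigma_Q^{-1/n}\scrB_s$ itself shrinks with $s$. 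You are also right that for $\Re\beta<0$ the statement needs a convention: with $\vecx_0=\vecnull$ the term $\vecj=\vecnull$ has $\dist=0$ and the sum is ill-defined unless such terms are discarded or $\vecx_0$ is restricted. The paper addresses neither point, so the one-line proof is genuinely incomplete for $\Re\beta<0$; your proposed direct arithmetic count over $q\le Q$ via $\gcd(q,N)$ is the natural way to close the gap, but it is not the mechanical transcription the paper's sentence suggests, and the uniform control in $\vecx_0$ and $N$ does require care.
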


The proof of this statement follows along the same lines as Proposition \ref{propidiscrete}.

\section{Minimal resonance orders}\label{minsec}

The paper \cite{Meiss21} which motivated \cite{Chen23} was in fact interested in the distribution of the minimal resonance order
\begin{equation}
M(\vecomega,\rho) = \min_{\vecp\in\ZZ^n\setminus\{0\}} \left\{ \| \vecp \|_1 :  \min_{q\in\ZZ} \Delta_{\vecp,q}(\vecomega) \leq \rho \right\}
\end{equation}
in the limit of small $\rho$,
where
\begin{equation}
\Delta_{\vecp,q}(\omega) =\frac{|\vecp\cdot\vecomega - q|}{\| \vecp\|_2} .
\end{equation}
This quantity appears naturally in the study of the breakdown of invariant tori in integrable dynamical systems under perturbation.
In the same vein as our previous discussion, the key fact is that
\begin{equation} \label{cal100011}
M(\vecomega,\rho) > L \rho^{-1/(n+1)} \Leftrightarrow
 \left\{ (\vecp,q)\in\ZZ^{n+1} :  0<\| \vecp \|_1 \leq L \rho^{-1/(n+1)},\; \Delta_{\vecp,q}(\omega) \leq \rho \right\} =\emptyset .
\end{equation}
The last statement is equivalent to
\begin{equation}
 \left\{ (\vecp,q)\in\ZZ^{n+1}\setminus\{ \vecnull \} :  \| \vecp \|_1 \leq L \rho^{-1/(n+1)},\; |\vecp\cdot\vecomega - q|\leq \rho \| \vecp\|_2  \right\} =\emptyset ,
\end{equation}
which is equivalent to
\begin{equation}
 \left\{ (\vecp,q)\in\hatZZ^{n+1} :  \| \vecp \|_1 \leq L \rho^{-1/(n+1)},\; |\vecp\cdot\vecomega - q|\leq \rho \| \vecp\|_2  \right\} =\emptyset .
\end{equation}
This in turn can be written as
\begin{equation}\label{lala}
\hatZZ^{n+1} \cap \fB_L \begin{pmatrix} \rho^{-1/(n+1)} 1_n & \trans\vecnull \\ \vecnull & \rho^{n/(n+1)} \end{pmatrix} 
\begin{pmatrix} 1_n & \trans\vecomega \\ \vecnull & 1\end{pmatrix} = \emptyset,
\end{equation}
where
\begin{equation}
\fB_L=\{ (\vecx,y)\in \RR^{n+1} : \|\vecx\|_1\leq L, \; |y|\leq \|\vecx\|_2  \} .
\end{equation}
The problem at hand is thus the distribution of visible lattice points in thin, randomly sheared sets, and a close variant of  the fine-scale statistics of linear forms studied in \cite{npoint}. In the present setting we can directly apply \cite[Theorem 6.7, $\vecalf=\vecnull$]{partI} to \eqref{lala}, which yields the following. 

\begin{prop}
Let $L>0$ and $\lambda$ a Borel probability measure on $\RR^n$ that is absolutely continuous with respect to Lebesgue measure.
Then
\begin{equation}\label{eqr}
\lim_{\rho\to 0} \lambda\left\{ \vecomega : \rho^{1/(n+1)}  M(\vecomega,\rho) > L \right\} 
= R(L) 
\end{equation}
with
\begin{equation}
R(L) = \mu\{ g\in\GamG : \hatZZ^{n+1}   g \cap \fB_L  = \emptyset\} . 
\end{equation}
\end{prop}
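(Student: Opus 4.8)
The plan is to convert the event in \eqref{eqr} into a lattice-point avoidance problem on $\GamG$ and then invoke the equidistribution machinery of \cite{partI}. The chain of equivalences already recorded above, running from \eqref{cal100011} down to \eqref{lala}, shows that for each fixed $\rho>0$ and $L>0$,
\[
\left\{ \vecomega : \rho^{1/(n+1)} M(\vecomega,\rho) > L \right\}
= \left\{ \vecomega : \hatZZ^{n+1} \cap \fB_L\, d_\rho\, u(\vecomega) = \emptyset \right\},
\]
where $d_\rho = \begin{pmatrix} \rho^{-1/(n+1)} 1_n & \trans\vecnull \\ \vecnull & \rho^{n/(n+1)} \end{pmatrix}$ and $u(\vecomega) = \begin{pmatrix} 1_n & \trans\vecomega \\ \vecnull & 1\end{pmatrix}$ are the two matrices appearing in \eqref{lala}. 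Both have determinant $1$, so $d_\rho u(\vecomega)\in G=\SL(n+1,\RR)$, and applying the inverse on the right turns the avoidance condition into
\[
\hatZZ^{n+1}\, g_\rho(\vecomega) \cap \fB_L = \emptyset,
\qquad g_\rho(\vecomega) := u(-\vecomega)\, d_\rho^{-1} \in G .
\]
The first step is thus purely bookkeeping: record this identification, and note that $\fB_L$ is bounded (on $\fB_L$ one has $|y| \le \|\vecx\|_2 \le \|\vecx\|_1 \le L$) with $\partial\fB_L \subset \{\|\vecx\|_1 = L\} \cup \{|y| = \|\vecx\|_2\}$, hence of Lebesgue measure zero.

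The second step is to recognise $\{\Gamma g_\rho(\vecomega) : \vecomega\in\RR^n\}$, in the limit $\rho\to 0$, as a family of randomly sheared expanding closed horospheres of precisely the kind treated in \cite[Section 6]{partI}: the map $\vecomega\mapsto\Gamma u(-\vecomega)$ is $\ZZ^n$-periodic (since $u(\ZZ^n)\subset\Gamma$) and so parametrises a closed horosphere, while $d_\rho^{-1}$ is the diagonal element that pushes this horosphere into the thin part of $\GamG$ as $\rho\to0$. Feeding the bounded, null-boundary set $\fB_L$ and the absolutely continuous measure $\lambda$ into \cite[Theorem 6.7]{partI} with $\vecalf=\vecnull$ then yields
\[
\lim_{\rho\to 0} \lambda\left\{ \vecomega : \hatZZ^{n+1}\, g_\rho(\vecomega) \cap \fB_L = \emptyset \right\}
= \mu\left\{ g\in\GamG : \hatZZ^{n+1} g \cap \fB_L = \emptyset \right\} = R(L),
\]
which is the assertion. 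The passage to the limit here also uses that, because $\meas(\partial\fB_L)=0$, the Siegel mean-value formula for primitive lattice points gives $\#(\hatZZ^{n+1}g\cap\partial\fB_L)=0$ for $\mu$-a.e.\ $g$, so that the avoidance functional $g\mapsto\ind{\hatZZ^{n+1}g\cap\fB_L=\emptyset}$ is continuous at $\mu$-a.e.\ point; this continuity property is among the hypotheses verified inside \cite[Theorem 6.7]{partI}.

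All the analytic substance sits in the cited equidistribution theorem, and in the present write-up there is no genuine obstacle beyond matching \eqref{lala} to its hypotheses. Were one to reprove that theorem here, the main difficulty would be the control of escape of mass: one must show that the pushforwards of $\lambda$ under $\vecomega\mapsto\Gamma g_\rho(\vecomega)$ form a tight family of probability measures on the noncompact space $\GamG$, so that no mass is lost into the cusp as $\rho\to 0$. This is exactly the role played by the quantitative nondivergence estimates for horospherical orbits (the same circle of ideas underlying the bounds in \eqref{bibo3} and \eqref{lalala2} above); once tightness is in hand, mixing of the diagonal subgroup — equivalently, equidistribution of expanding closed horospheres — pins the limit down to $\mu$, and the absolute continuity of $\lambda$ ensures that the random shear does not spoil this.
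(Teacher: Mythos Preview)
Your proof is correct and follows exactly the paper's own approach: both reduce the event to the lattice-avoidance statement \eqref{lala} and then invoke \cite[Theorem 6.7, $\vecalf=\vecnull$]{partI}. You supply more detail than the paper does (verifying that $\fB_L$ is bounded with null boundary, spelling out the inversion to $g_\rho(\vecomega)$, and sketching the tightness/mixing mechanism behind the cited theorem), but the substance is identical.
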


In previous sections we chose a random vector uniformly distributed in $\scrD$, bounded with boundary of measure zero, while here the law of $\vecx$ is given by an absolutely continuous probability measure $\lambda$. This was done to have a direct reference to the theorems quoted. A standard measure-theoretic argument shows that the two formulations are equivalent.


The density of $R(L)$ corresponds to the histogram in Fig. 8(a) of \cite{Meiss21} for $n=2$. The exponent of $\rho$  in Fig. 8(b) is $\approx 0.334$, which is consistent with the theoretically predicted scaling with exponent $1/(n+1)$ in \eqref{eqr}.

For further variations on the techniques reviewed here, including applications to integrable dynamics, we refer the reader to \cite{Dettmann17}.

\end{document}